\documentclass[11pt]{amsart}
\usepackage{amsmath}
\usepackage{amsfonts}
\usepackage{amssymb}
\usepackage{amsthm}
\usepackage{color}
\usepackage{etoolbox}
\usepackage{graphicx}
\usepackage{subcaption}
\usepackage[mathscr]{euscript}

\usepackage{fancyhdr}
 
\pagestyle{fancy}
\fancyhf{}
\fancyhead[CE]{\leftmark}
\fancyhead[CO]{\rightmark}
\cfoot{\vspace{5px}\thepage}

\theoremstyle{plain}
\newtheorem{theorem}{Theorem}[section]
\newtheorem{corollary}[theorem]{Corollary}
\newtheorem{lemma}[theorem]{Lemma}
\newtheorem{prop}[theorem]{Proposition}

\theoremstyle{definition}
\newtheorem{definition}[theorem]{Definition}

\begin{document}

\title{Enumerating the class of minimally path connected simplicial complexes}
\author{Lewis Mead}
\begin{abstract}
In 1983 Kalai proved an incredible generalisation of Cayley's formula for the number of trees on a labelled vertex set to a formula for a class of $r$-dimensional simplicial complexes. These simplicial complexes generalise trees by means of being homologically $\Bbb Q$-acyclic. In this text we consider a different generalisation of trees to the class of pure dimensional simplicial complexes that \textit{minimally connect a vertex set} (in the sense that the removal of any top dimensional face disconnects the complex). Our main result provides an upper and lower bound for the number of these minimally connected complexes on a labelled vertex set. We also prove that they are potentially vastly more topologically complex than the generalisation of Kalai. As an application of our bounds we compute the threshold probability for the connectivity of a random simplicial complex.
\end{abstract}
\maketitle
\section{Introduction}
Trees have a rich history of study dating back to the 1860s with their simple enumeration first given by Borchardt \cite{borchardt} that is now commonly referred to as Cayley's formula \cite{cayley}.
\vskip.1in
\noindent{\bf Cayley's formula.}
The number of trees on $n$ labelled vertices is $n^{n-2}$.
\vskip.1in
A tree may be uniquely characterised as being a connected and acyclic graph, both of these are topological properties that generalise naturally to higher dimensions and so it made sense to do so using the language of higher dimensional combinatorial structures -- i.e. simplicial complexes. This was done so in the groundbreaking paper of Kalai \cite{kalai} where he introduced \emph{$\Bbb Q$-acyclic simplicial complexes}.

\vskip.1in
\noindent{\bf Definition.}
$T$ is an $r$-dimensional \emph{$\Bbb Q$-acyclic simplicial complex} if $T$ is a simplicial complex with full $(r-1)$-dimensional skeleton with both $H_{r-1}(T;\Bbb Q) = 0$ and $H_r(T;\Bbb Q) = 0$.
\vskip.1in

We let $\mathcal{T}_{r}(n)$ denote the class of all such simplicial complexes on a labelled vertex set $[n] = \{1,\dots,n\}$. Kalai managed to find the following beautiful generalisation of Cayley's formula to this class of higher dimensional acyclic simplicial complexes.
\vskip.1in
\noindent{\bf Theorem of Kalai.}
Let $d < n$ be integers, then
$$\sum_{T\in \mathcal{T}_r(n)} \left|H_{r-1}(T;\Bbb Z)\right|^2 = n^{\binom{n-2}{r}}.$$
\vskip.1in
One need not generalise trees to higher dimensions in such a way however. Trees may also be uniquely characterised as connected graphs such that the removal of any edge disconnects it. This is a property that is certainly not generalised via the work the of Kalai. Generalising this notion of being minimally path connected in the sense of removing something and becoming disconnected was introduced by Schmidt-Pruzan and Shamir \cite{schmidt} where they used the language of hypergraphs. 

\vskip.1in
\noindent{\bf Definition.}
A \emph{hypertree} (or \emph{h-tree}) is a hypergraph $H = (V,E)$ that is connected and the removal of any edge from $E$ will disconnect $V$.
\vskip.1in
It is this flavour of generalisation that we will study in this text. We reformulate the definition of h-trees using the language of simplicial complexes -- the primary reason for this is that we will also be concerned with both the geometric and homological connectivity of these objects, making simplicial complexes the natural combinatorial framework to work with.

We call a simplicial complex an $r$-dimensional \emph{minimal connected cover} if it is connected and the removal of any $r$-dimensional simplex disconnects it (see Definition~\ref{def:mcc}).

Let $\mathcal{M}_r(n)$ denote the class of all such simplicial complexes on a labelled vertex set $[n] = \{1,\dots,n\}$. A primary goal of this text was to estimate the quantity $M_r(n):= |\mathcal{M}_r(n)|$. To this end we show the following (see Proposition~\ref{prop:lowerbound} and Corollary~\ref{cor:upperbound}):
\vskip.1in
\noindent{\bf Main Theorem.}
Fix $r\geq 1$ any integer. There exists constants $A,B > 0$ such that 
$$A^n\cdot n^n\leq M_r(n)\leq B^n\cdot n^n.$$
\vskip.1in
With the lower bound computed using the original work of \cite{schmidt} and the upper bound computed by relating minimal connected covers to a combinatorial object that have a known enumeration \cite{beineke}.

It's clear that $\Bbb Q$-acyclic simplicial complexes of Kalai are homologically connected up to dimension $r-2$ by the condition upon having full $(r-1)$-skeleton included, i.e. if $i\leq r-2$ then $H_i(T;\Bbb Z) = 0$ for all $T\in\mathcal{T}_r(n)$. The same is not necessarily true of minimal connected covers, in fact we show that any finite abelian group can be realised as a homology group of some minimal connected cover -- see Corollary~\ref{cor:anygroup}.

We conclude the text with what was the motivating example behind studying minimal connected covers, finding the threshold probability for a pure $r$-dimensional random simplicial complex to be connected. This is a topic that has been studied to great success by Cooley, Kang et al. \cite{cooley2,cooley, cooley3} in the last few years. These texts go far behind anything that we try to achieve here, studying thresholds for the vanishing of cohomology in different dimensions as well as what precisely happens within the phase transition itself. Here we will emulate the classical proof of Erd\H{o}s and R\'{e}nyi for computing the threshold for connectivity of a random graph that utilises in a fundamental way Cayley's formula, the bounds on $M_r(n)$ will play a similarly crucial role -- see Theorem~\ref{thm:generalconn}.
\section{Structure of Minimal Connected Covers}
\begin{definition}
Let $X$ be a simplicial complex. We say that a simplex $\sigma\in X$ is a \textit{maximal simplex} (or a \textit{facet}) if for every $\tau\supseteq\sigma$ then $\tau\in X$ if and only if $\tau = \sigma$. That is, no larger simplexes in $X$ contain $\sigma$.
\end{definition}
For the purposes of this text whenever we talk of maximal simplexes we will always assume they are of dimension at least $1$, i.e. we never consider isolated vertices to be maximal simplexes. We say that a simplicial complex $X$ is \textit{pure} if all maximal simplexes are of the dimension.
\begin{definition}
Let $X$ be a simplicial complex on vertex set $V = V(X)$. Let $M = M(X)$ be the set of maximal simplexes. We say that $X'$ is attained from $X$ by \textit{removing} a maximal simplex $\sigma\in M$ if $V(X') = V$ and $M(X') = M - \{\sigma\}$.
\end{definition}
\begin{definition}\label{def:mcc}
Let $Y$ be a pure $r$-dimensional simplicial complex on $[n]$. We call $Y$ a \textit{minimal connected cover} if $Y$ is connected and the removal of any facet disconnects $Y$.

Let $\mathcal{M}_r(n)$ denote the set of $r$-dimensional minimal connected covers on $[n]$ and let $M_r(n) = \left|\mathcal{M}_r(n)\right|$.
\end{definition}

\begin{figure}[h]\centering
\includegraphics[scale=0.9]{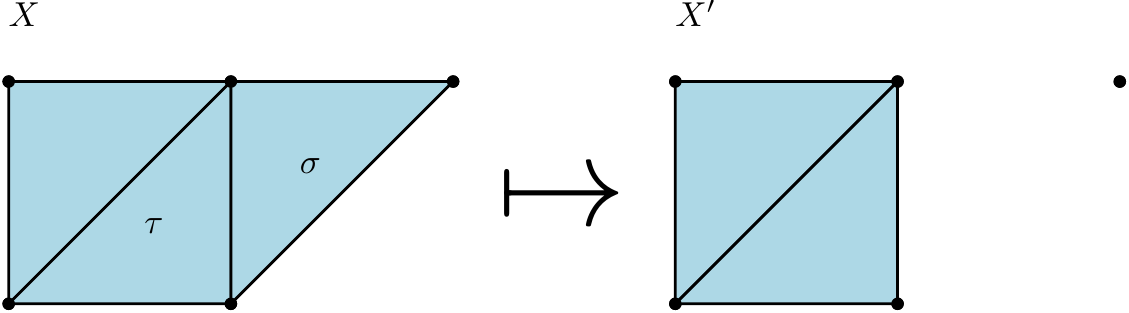}
\caption{A maximal simplex $\sigma$ is removed from a simplicial complex $X$. Observe that though removing $\sigma$ disconnects $X$ it is not true that $X$ a minimal connected cover as there exists a facet, $\tau$, that we could remove without disconnecting $X$.}
\end{figure}

\begin{prop}\label{prop:homconn}
For any $Y\in\mathcal{M}_r(n)$ one has
$$H_{r-1}(Y;\Bbb Z)_T = 0\quad\text{and}\quad H_r(Y;\Bbb Z) = 0.$$
\end{prop}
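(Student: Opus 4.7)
The plan is to reduce both assertions to the single claim that $H_r(Y;R) = 0$ for $R = \Bbb Z$ and for $R = \Bbb Z/p$ for every prime $p$. The $R = \Bbb Z$ case gives the second assertion directly; for the torsion-freeness of $H_{r-1}(Y;\Bbb Z)$, I would combine $H_r(Y;\Bbb Z) = 0$ with the universal coefficient theorem to obtain the isomorphism $H_r(Y;\Bbb Z/p) \cong \mathrm{Tor}(H_{r-1}(Y;\Bbb Z),\,\Bbb Z/p)$, so vanishing of the left-hand side for every prime $p$ forces $H_{r-1}(Y;\Bbb Z)$ to have no $p$-torsion for any $p$, hence be torsion-free.

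For the uniform vanishing claim I would argue by contradiction. Suppose $z = \sum_\sigma a_\sigma \sigma$ is a non-trivial $r$-cycle over $R$ and pick any facet $\sigma_0$ with $a_{\sigma_0} \neq 0$. The cycle condition $\partial z = 0$ forces, for every $(r-1)$-face $\tau$ of $\sigma_0$, the existence of some other facet $\sigma' \neq \sigma_0$ of $Y$ with $a_{\sigma'} \neq 0$ and $\tau \subseteq \sigma'$; otherwise the coefficient of $\tau$ in $\partial z$ would be $\pm a_{\sigma_0}$ with no cancellation. Hence every $(r-1)$-face of $\sigma_0$ is contained in at least one other facet of $Y$.

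From this I would deduce that the complex $Y'$ obtained by removing $\sigma_0$ (in the paper's sense) is still connected, which contradicts the definition of minimal connected cover. For $r \geq 2$: every edge $e$ of $\sigma_0$ lies inside some $(r-1)$-face of $\sigma_0$, which by the above observation lies in a facet $\sigma' \neq \sigma_0$, so $e$ is a face of $\sigma' \in M(Y')$. Thus the $1$-skeleton of $Y'$ coincides with that of $Y$ and no vertex of $\sigma_0$ is isolated, so $Y'$ is connected. For the base case $r = 1$, minimal connected covers are precisely trees and the statement is classical; equivalently, the non-vanishing of $z$ at the edge $\sigma_0$ forces $\sigma_0$ to lie on a graph cycle inside the support of $z$, providing an alternative path between its endpoints in $Y'$.

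The main delicate point is that the paper's facet-removal operation can orphan sub-faces of $\sigma_0$ or produce isolated vertices, which could in principle disconnect $Y'$ even when the combinatorics seem benign. The observation above rules this out completely: every sub-face of $\sigma_0$ sits inside some $(r-1)$-face of $\sigma_0$, hence inside another facet $\sigma'$ of $Y$, so no sub-face of $\sigma_0$ is genuinely orphaned upon removal. This is exactly the content that converts the chain-level cycle condition into the topological statement needed for the contradiction.
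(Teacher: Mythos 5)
Your proof is correct, but it takes a genuinely different route from the paper, even though both hinge on the same combinatorial observation. You observe (as a contrapositive) that a nonzero $r$-cycle supported on a facet $\sigma_0$ forces every $(r-1)$-face of $\sigma_0$ to lie in some other facet, so that removing $\sigma_0$ preserves the $1$-skeleton and hence connectivity --- contradiction. This is precisely the paper's assertion that every facet of a minimal connected cover possesses a free $(r-1)$-face. The divergence is in how this fact is exploited. The paper uses it \emph{directly}: each facet is collapsed along its free face (and since a free face of $\sigma$ is by definition contained in no other facet, these collapses are independent of one another), yielding a homotopy equivalence from $Y$ to an $(r-1)$-dimensional complex; this kills $H_r$ and makes $H_{r-1}$ free, since top-dimensional homology of a complex is a subgroup of a free abelian group. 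You instead stay entirely at the chain level, rerunning the argument over $\Bbb Z$ and over $\Bbb Z/p$ for every prime $p$, then use the universal coefficient theorem to convert $H_r(Y;\Bbb Z/p)=0$ into the absence of $p$-torsion in $H_{r-1}(Y;\Bbb Z)$. Your route avoids any appeal to simplicial collapsibility and homotopy equivalence at the cost of invoking UCT and quantifying over all primes; the paper's route is shorter and gives the stronger, visibly topological conclusion that $Y$ has the homotopy type of an $(r-1)$-complex. One small remark: in your cycle argument the condition $a_{\sigma'}\neq 0$ on the witnessing facet $\sigma'$ is true but not needed; existence of \emph{some} other facet containing $\tau$ already suffices for the connectivity contradiction.
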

\begin{proof}
Every $r$-dimensional simplex $\sigma\in Y$ contains at least one free $(r-1)$-dimensional face, if this were not the case when one could remove such a $\sigma$ from $Y$ without affecting the path connectivity so $Y$ certainly could not have been minimally connected. We may therefore simplicially collapse every maximal $r$-simplex along this free face to obtain a new complex $Y'$ of dimension $r-1$ that is homotopy equivalent to $Y$, the statement then follows as $Y$ has homotopy dimension at most $r-1$.
\end{proof}
Notice the difference in homological behaviour of minimal connected covers compared with $\Bbb Q$-acyclic simplicial complexes of Kalai. Both require that the top dimensional homology $H_r$ vanishes, but the condition upon $H_{r-1}$ are completely opposing. As we shall see, in lower dimensions their differences increase even further.
\begin{prop}\label{prop:triangulable}
For any $k$-dimensional topological space $X$ with a finite triangulation there exists a minimal connected cover $Y\in \mathcal{M}_r(n)$ for any $r>k$ and some $n$ such that
$$Y\simeq X.$$
\end{prop}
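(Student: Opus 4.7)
The strategy is to fatten any finite triangulation $T$ of $X$ by attaching an $r$-simplex to each of its maximal faces along fresh \emph{private} vertices. This simultaneously pushes the dimension up to $r$ and builds in the required disconnection mechanism, while preserving homotopy type.

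First I would fix a finite triangulation $T$ of $X$; implicitly one must assume $X$ is path-connected, since a minimal connected cover is connected by definition. Let $\tau_1,\dots,\tau_m$ be the maximal faces of $T$, of dimensions $d_i\leq k<r$. For each $\tau_i$ introduce $r-d_i\geq 1$ brand-new vertices $W_i$, pairwise disjoint and disjoint from $V(T)$. Let $\sigma_i$ denote the $r$-simplex on vertex set $\tau_i\cup W_i$, and let $Y$ be the simplicial complex with vertex set $V(T)\cup\bigcup_i W_i$ (relabelled as $[n]$) and simplex set consisting of all simplexes of $T$ together with all faces of every $\sigma_i$.

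Next I would verify the three required properties. Purity: every simplex of $T$ lies inside some $\sigma_i$ via a $\tau_i$ containing it, so the facets of $Y$ are exactly the $r$-simplexes $\sigma_i$. Connectivity: $T\subseteq Y$ is connected and each $\sigma_i$ attaches to $T$ along $\tau_i$. Disconnection under removal: any simplex of $Y$ containing a vertex of $W_i$ lies only inside $\sigma_i$, so removing the facet $\sigma_i$ leaves the vertices of $W_i$ isolated and hence disconnects $Y$. For $Y\simeq X$, I would observe that each $\sigma_i$ deformation retracts onto $\tau_i$ by linearly pushing the $W_i$-vertices into $\tau_i$, and that these deformation retractions agree on the overlaps $\sigma_i\cap\sigma_j=\tau_i\cap\tau_j\subseteq T$ (where they are the identity), so they glue to a deformation retraction $Y\to T$; combined with $T\simeq X$ this yields $Y\simeq X$.

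The one slightly delicate point, and precisely where privateness of the $W_i$ earns its keep, concerns the paper's convention that removal of a facet deletes only the facet itself and not its vertices: one needs the vertices of $W_i$ to become genuinely isolated after removing $\sigma_i$, which is forced exactly by the $W_i$ being disjoint from $V(T)$ and from all other $W_j$. The only edge case is $X$ a single point, where $T=\{v\}$ and $Y$ is a single $r$-simplex, trivially a minimal connected cover homotopy equivalent to $X$.
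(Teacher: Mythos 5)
Your construction is exactly the paper's: the paper joins each maximal $\ell$-simplex of the triangulation $T$ to a new, uniquely labelled $(r-\ell-1)$-simplex (your private vertex sets $W_i$) and observes that the result collapses onto $T$. Your write-up just verifies purity, connectivity, and the disconnection property more explicitly, and replaces the simplicial collapse by an equivalent glued deformation retraction, so it is correct and essentially the same proof.
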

\begin{proof}
Let $T$ be a triangulation of $X$. To every maximal simplex in $T$ of dimension $\ell$ we may simplicially join it to a new uniquely labelled simplex of dimension $r-\ell -1$, call this new simplicial complex $Y$. Clearly $Y$ simplicially collapses onto $T$, in particular $Y\simeq T$.
\end{proof}
\begin{figure}[h]\centering
\includegraphics[scale=0.7]{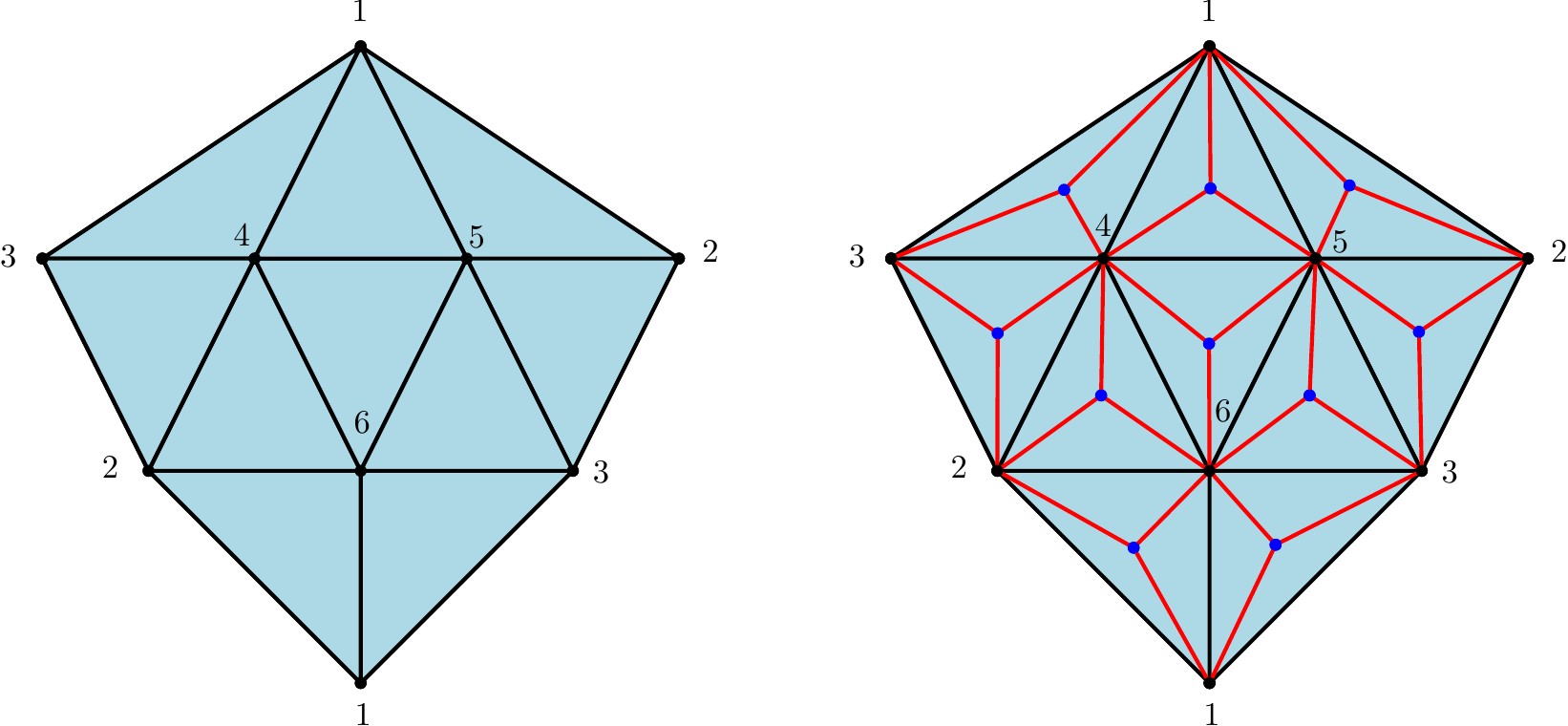}
\caption{Shows the process described in the proof of Proposition~\ref{prop:triangulable}. A triangulation of $\Bbb R\Bbb P^2$ is turned into a $3$-dimensional minimal connected cover.}
\label{fig:RP2}
\end{figure}
\begin{corollary}\label{cor:homfinite}
For any integer $r\geq 3$, any $m\geq 2$ and all $1\leq k\leq r-2$ there exists an $n$ such that there is a $Y\in\mathcal{M}_r(n)$ with $H_k(Y) = \Bbb Z_m$.
\end{corollary}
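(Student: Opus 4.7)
The plan is to reduce the corollary directly to Proposition~\ref{prop:triangulable}. For each pair $(k,m)$ with $k\geq 1$ and $m\geq 2$ I would produce a finite simplicial complex $X$ of dimension exactly $k+1$ satisfying $H_k(X;\Bbb Z)\cong \Bbb Z_m$. The hypothesis $1\leq k\leq r-2$ rearranges to $r\geq k+2 > \dim X$, so Proposition~\ref{prop:triangulable} supplies a minimal connected cover $Y\in\mathcal{M}_r(n)$ with $Y\simeq X$, and homotopy invariance of singular homology gives $H_k(Y;\Bbb Z)\cong H_k(X;\Bbb Z)\cong \Bbb Z_m$. Note that the upper bound $k\leq r-2$ (as opposed to $k\leq r-1$) is forced on us by Proposition~\ref{prop:homconn}, since torsion in $H_{r-1}$ is excluded; asking for a space of dimension $k+1$ rather than $k$ is also forced, because the top integral homology of any $k$-dimensional simplicial complex is free.

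The natural candidate for $X$ is the Moore space $M(\Bbb Z_m, k)$, the CW complex obtained by attaching a single $(k+1)$-cell to $S^k$ along a self-map of degree $m$. Its cellular chain complex has precisely one nontrivial boundary, multiplication by $m$ from degree $k+1$ to degree $k$, immediately giving $H_k=\Bbb Z_m$ with all other reduced homology vanishing. To feed this into Proposition~\ref{prop:triangulable} I need an honest simplicial (not merely CW) model of dimension $k+1$: I would fix the standard triangulation of $S^k$ as $\pa\Delta^{k+1}$, realise a degree-$m$ self-map simplicially on a sufficiently fine subdivision via simplicial approximation, and then form the simplicial mapping cone, applying one round of barycentric subdivision at the end if required so that the attachment itself is simplicial. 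The resulting $X$ is a $(k+1)$-dimensional simplicial complex homotopy equivalent to $M(\Bbb Z_m,k)$.

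The main obstacle is precisely this triangulation step: I must keep the simplicial model from gaining a dimension. Barycentric subdivision preserves dimension, and the Moore space admits a $\Delta$-complex structure supported only in dimensions $0$, $k$, and $k+1$ which refines to a simplicial complex without introducing higher-dimensional simplexes, so the construction does go through, but this is the only part of the proof that requires any work beyond invoking Proposition~\ref{prop:triangulable}. Once $X$ is produced, the conclusion is immediate: for any $r\geq k+2$ the proposition delivers some $n$ and some $Y\in\mathcal{M}_r(n)$ with $Y\simeq X$, and hence $H_k(Y)=\Bbb Z_m$, completing the proof.
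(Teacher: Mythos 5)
The proposal is correct and takes essentially the same approach as the paper: both realize $\Bbb Z_m$ as $H_k$ of a triangulated model of the Moore space $M(\Bbb Z_m,k)$ of dimension $k+1<r$, then invoke Proposition~\ref{prop:triangulable}. The only real difference is the mechanism for turning the CW description of the Moore space into a simplicial complex: the paper takes the nerve of a sufficiently fine open cover and cites the Nerve Lemma, whereas you triangulate directly by simplicially approximating a degree-$m$ self-map of $\pa\Delta^{k+1}$ and forming the simplicial mapping cone. Your route has one advantage worth flagging: it makes the dimension $k+1$ of the simplicial model explicit, while in the paper's version the nerve has dimension $k+1$ only if the cover is chosen to have multiplicity at most $k+2$ --- a point the paper leaves implicit but which matters, since Proposition~\ref{prop:triangulable} requires the triangulated space to have dimension strictly less than $r$.
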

\begin{proof}
Consider the Moore space $M = M(\Bbb Z_m, k)$, that is a finite CW complex of dimension $k+1$ such that $H_k(M) = \Bbb Z_m$ and $H_i(M) = 0$ for all $i\neq k$.\footnote{$M$ is obtained from the sphere $S^k$ by attaching one $(k+1)$-cell by a map $S^k\to S^k$ of degree $m$.} One may then cover $M$ by open sets sufficiently finely so that the conditions of the Nerve Lemma (see Corollary 4G.3 of \cite{hatcher}) are satisfied, the obtained nerve complex of this cover $N$ has the same homotopy type as our Moore space $M$, in particular $H_k(N) = \Bbb Z_m$ and we may conclude by application of Proposition~\ref{prop:triangulable} to this $N$.
\end{proof}
In the proof of Corollary~\ref{cor:homfinite} one could alternatively consider the vertex minimal construction of a simplicial complex $X$ with prescribed torsion $H_{k-1}(X) = \Bbb Z_m$ as constructed in the paper Newman \cite{newman}. Perhaps with some work this same construction could be shown to give rise to the vertex minimal minimal connected cover with prescribed torsion.
\begin{corollary}\label{cor:homfree}
For all $r\geq 2$, any $1\leq i\leq r-1$ there exists an $n$ such that there is a $Y\in\mathcal{M}_r(n)$ with $H_i(Y) = \Bbb Z$.
\end{corollary}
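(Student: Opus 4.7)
The plan is to mimic the proof of Corollary~\ref{cor:homfinite} but with the Moore space replaced by a space whose $i$-th homology is $\Bbb Z$ rather than $\Bbb Z_m$. The most natural candidate is the $i$-sphere $S^i$, which is finite, has a standard finite triangulation (for instance the boundary of the $(i+1)$-simplex), is $i$-dimensional, and satisfies $H_i(S^i) = \Bbb Z$ with $H_j(S^i) = 0$ for all $0 < j \neq i$.

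Given $r \geq 2$ and $1 \leq i \leq r-1$, I would fix a finite triangulation $T$ of $S^i$. Since $i \leq r-1 < r$, the hypothesis $r > \dim(S^i)$ needed to invoke Proposition~\ref{prop:triangulable} is satisfied. Applying that proposition to $T$ produces some $n$ and a minimal connected cover $Y \in \mathcal{M}_r(n)$ with $Y \simeq S^i$. Homotopy equivalence preserves singular homology, so $H_i(Y) = H_i(S^i) = \Bbb Z$, which is exactly what the corollary asks for.

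There is really no obstacle here beyond recognising that Proposition~\ref{prop:triangulable} is flexible enough to accept any finite-dimensional triangulable space, not just Moore spaces. The only small thing to notice is that the construction in Proposition~\ref{prop:triangulable} joins each facet of the triangulation to a fresh simplex of appropriate dimension, which automatically increases the ambient dimension of the complex to exactly $r$ and fills in the vertex count $n$; no further care is needed to control $n$ since the statement only asserts the existence of some such $n$.
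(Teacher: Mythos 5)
Your proof is correct and is essentially identical to the paper's: the paper's entire proof is to apply Proposition~\ref{prop:triangulable} to a triangulation of the $i$-sphere $S^i$. Your extra commentary about the dimension hypothesis $i \leq r-1 < r$ and about not needing to control $n$ is accurate and just fills in the details the paper leaves implicit.
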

\begin{proof}
Apply Proposition~\ref{prop:triangulable} to a triangulation of the $i$-sphere $S^i$.
\end{proof}
We make note of the following obvious but useful observation.
\begin{lemma}\label{lem:wedge}
Let $Y_1\in \mathcal{M}_r(n_1)$ and $Y_2\in \mathcal{M}_r(n_2)$ then their wedge along any two vertices is also a minimal connected cover, in particular $Y_1\vee Y_2\in\mathcal{M}_r(n_1+n_2-1)$.
\end{lemma}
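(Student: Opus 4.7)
The plan is to verify each defining condition of Definition~\ref{def:mcc} for $Y := Y_1 \vee Y_2$. After relabelling we may assume $V(Y_1) \cap V(Y_2) = \{v\}$, where $v$ is the chosen wedge vertex. Then $|V(Y)| = n_1 + n_2 - 1$, and $Y$ is pure $r$-dimensional because the facet set $M(Y) = M(Y_1) \sqcup M(Y_2)$ is a disjoint union of two families of $r$-simplices (no facet can cross the wedge point, which is a single vertex). Connectivity is immediate: every simplex of $Y$ lies in one of the connected subcomplexes $Y_1$ or $Y_2$, and both contain $v$, so $v$ acts as a hub joining everything together.

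For the minimality condition, let $\sigma$ be any facet of $Y$. By construction $\sigma$ belongs to exactly one summand; without loss of generality $\sigma \in M(Y_1)$. Since $Y_1 \in \mathcal{M}_r(n_1)$, the complex $Y_1 \setminus \{\sigma\}$ is disconnected. Let $C$ be the connected component of $Y_1 \setminus \{\sigma\}$ containing the wedge vertex $v$ (possibly $C = \{v\}$ if $v$ becomes isolated), and let $D$ be the union of the remaining components, which is non-empty by the minimality of $Y_1$. Then I claim
$$Y \setminus \{\sigma\} \; = \; (C \cup Y_2) \,\sqcup\, D$$
as a decomposition into subcomplexes sharing no vertices. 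Indeed $C \cup Y_2$ is path connected through $v$, while $D$ is disjoint from $C$ inside $Y_1$ by definition, and disjoint from $Y_2 \setminus \{v\}$ by our labelling convention. Hence removing $\sigma$ disconnects $Y$, as required.

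The argument is essentially a bookkeeping one, and the only real care needed is the first sentence of the previous paragraph: one must confirm that a facet of $Y$ cannot come from ``both'' summands simultaneously, which uses the fact that wedging along a single vertex cannot produce a new $r$-simplex when $r \geq 1$. This, together with the disjointness of labels outside $v$, ensures that disconnection inside $Y_1$ genuinely propagates to the whole wedge. No topological input beyond the definitions of wedge and minimal connected cover is needed.
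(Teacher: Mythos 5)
The paper states this lemma without proof, labelling it an ``obvious but useful observation,'' so there is no argument of the author's to compare against. Your write-up is a correct and complete fill-in: you verify purity by noting that $M(Y_1\vee Y_2)=M(Y_1)\sqcup M(Y_2)$ (a facet cannot straddle the single wedge vertex since $r\geq 1$), connectivity via the hub vertex $v$, and minimality via the decomposition $Y\setminus\{\sigma\}=(C\cup Y_2)\sqcup D$ with $D$ nonempty and vertex-disjoint from $C\cup Y_2$. The edge case where $v$ itself becomes isolated upon removing $\sigma$ is handled correctly. No gaps.
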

\begin{corollary}\label{cor:anygroup}
Fix $r\geq 2$ and let $G$ be any finitely presented abelian group. Then for any $1\leq k\leq r-2$ there exists an $n$ such that there is a $Y\in\mathcal{M}_r(n)$ with $H_k(Y) = G$.
\end{corollary}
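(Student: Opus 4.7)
The plan is to decompose $G$ via the structure theorem for finitely generated abelian groups and realise each cyclic summand by a minimal connected cover produced by Corollary~\ref{cor:homfinite} or Corollary~\ref{cor:homfree}, then wedge the pieces together using Lemma~\ref{lem:wedge}. Note that the hypothesis $1\leq k\leq r-2$ already forces $r\geq 3$, which is exactly the range in which Corollary~\ref{cor:homfinite} applies.

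First I would write
$$G \;\cong\; \Bbb Z^a \oplus \Bbb Z_{m_1}\oplus \cdots \oplus \Bbb Z_{m_b}, \qquad m_j\geq 2,$$
using the structure theorem together with the fact that in the abelian setting ``finitely presented'' coincides with ``finitely generated''. For each of the $a$ free summands I would take a minimal connected cover $Y_i\in\mathcal{M}_r(n_i)$ with $Y_i\simeq S^k$ supplied by Corollary~\ref{cor:homfree} (applicable since $k\leq r-1$); this gives $H_k(Y_i)=\Bbb Z$. For each torsion summand I would take $Y_{a+j}\in\mathcal{M}_r(n_{a+j})$ supplied by Corollary~\ref{cor:homfinite}, which has the homotopy type of a Moore space $M(\Bbb Z_{m_j},k)$ and hence $H_k(Y_{a+j})=\Bbb Z_{m_j}$.

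Next I would form the iterated wedge $Y:=Y_1\vee Y_2\vee\cdots\vee Y_{a+b}$ by identifying basepoint vertices pairwise. Iterating Lemma~\ref{lem:wedge} places $Y$ in $\mathcal{M}_r(n)$ for $n=\sum_i n_i-(a+b-1)$. Because each $Y_i$ is a connected simplicial complex wedged at a vertex, the standard wedge identity for reduced homology yields
$$H_k(Y) \;=\; \tilde{H}_k(Y) \;\cong\; \bigoplus_{i=1}^{a+b}\tilde{H}_k(Y_i) \;=\; \Bbb Z^a\oplus \Bbb Z_{m_1}\oplus\cdots\oplus \Bbb Z_{m_b} \;=\; G,$$
where the first equality uses $k\geq 1$. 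The degenerate case $G=0$ (so $a=b=0$) must be handled separately: Corollary~\ref{cor:homfree} applied with $i=r-1$ produces a $Y\in\mathcal{M}_r(n)$ homotopy-equivalent to $S^{r-1}$, and this $Y$ satisfies $H_k(Y)=0$ throughout $1\leq k\leq r-2$.

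No serious obstacle is expected; the only point requiring brief verification is that the vertex-wedge provided by Lemma~\ref{lem:wedge} agrees, up to homotopy, with the topological wedge sum used in the homology identity. This is automatic since the $Y_i$ are connected simplicial complexes and every vertex is a good basepoint, so the wedge formula applies without modification.
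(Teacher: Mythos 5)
Your proposal is correct and follows the same approach as the paper's own proof: decompose $G$ via the structure theorem, realise the free and torsion cyclic summands using Corollary~\ref{cor:homfree} and Corollary~\ref{cor:homfinite} respectively, and combine by iterating the wedge construction of Lemma~\ref{lem:wedge}. The paper's proof is essentially a two-sentence version of this; your write-up is more careful about the hypothesis $r\geq 3$ being implicit, the degenerate case $G=0$, and the justification of the wedge formula for homology.
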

\begin{proof}
We may write $G = \Bbb Z^\alpha\oplus \Bbb Z_{m_1}\oplus\dots\oplus \Bbb Z_{m_\beta}$ by the Fundamental Theorem of Abelian Groups. The result then follows by applications of Corollary~\ref{cor:homfinite}, Corollary~\ref{cor:homfree}, and Lemma~\ref{lem:wedge}.
\end{proof}
\begin{lemma}\label{lemfacetnumber}
If $Y\in\mathcal{M}_r(n)$ then
$$\left\lceil\dfrac{n-1}{r}\right\rceil\leq f_r(Y)\leq n - r.$$
\end{lemma}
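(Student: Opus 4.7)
The plan is to prove the two bounds separately. For the lower bound I order the facets $\sigma_1, \ldots, \sigma_{f_r(Y)}$ so that each $\sigma_i$ with $i \geq 2$ shares at least one vertex with $\sigma_1 \cup \cdots \cup \sigma_{i-1}$; this is possible since $Y$ is connected, and each such $\sigma_i$ then contributes at most $r$ new vertices to the running union. As the facets jointly cover all of $[n]$ (connectedness of $Y$ rules out isolated vertices once $n \geq 2$), this yields $n \leq (r+1) + r(f_r(Y) - 1)$, which rearranges to $f_r(Y) \geq \lceil (n-1)/r \rceil$.

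For the upper bound I induct on $n$. The base case $n = r+1$ is immediate: $Y$ must be the single $r$-simplex on $[n]$, so $f_r(Y) = 1 = n-r$. For the inductive step fix any facet $\sigma \in Y$ and let $C_1, \ldots, C_k$ with $k \geq 2$ be the components of $Y - \sigma$. Write $n_i = |V(C_i)|$, $m_i = f_r(C_i)$, and let $t$ be the number of components that are single isolated vertices, each such vertex being a leaf of $\sigma$. A short argument using connectedness of $Y$ shows that every $C_i$ meets $V(\sigma)$ (any edge leaving $C_i$ inside $Y$ must itself lie in $\sigma$, forcing an endpoint of that edge into $V(C_i) \cap V(\sigma)$). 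Together with $n > r+1$ this forces $t \leq r$, so at least one non-isolated component exists and $k \geq t+1$.

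The central claim, and the main obstacle, is that every non-isolated $C_i$ is itself an $r$-dimensional minimal connected cover on $n_i < n$ vertices. Purity follows since any simplex of $C_i$ is contained in some facet of $Y$ distinct from $\sigma$, and the full vertex set of that facet lies in $V(C_i)$ because the facet is internally connected by its edges inside $Y - \sigma$. For minimality, if removing some facet $\tau$ of $C_i$ failed to disconnect $C_i$, I would combine the persistence of $\sigma$ in $Y - \tau$, the unaffected connectivity of each $C_j$ for $j \neq i$ (each meeting $V(\sigma)$), and the supposed connectivity of $C_i - \tau$ glued to $\sigma$ through a vertex in $V(\sigma) \cap V(C_i)$, to conclude $Y - \tau$ is connected, contradicting $Y \in \mathcal{M}_r(n)$.

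Granting this structural claim, the inductive hypothesis gives $m_i \leq n_i - r$ for each non-isolated $C_i$. Substituting $\sum n_i = n$ and $\sum m_i = f_r(Y) - 1$ reduces the desired bound $f_r(Y) \leq n-r$ to the purely combinatorial inequality $rk - (r-1)t \geq r + 1$, which follows by a short case split: when $t = 0$ from $k \geq 2$, and when $t \geq 1$ from $k \geq t + 1$. Once the structural claim about the $C_i$ is established, the rest is bookkeeping.
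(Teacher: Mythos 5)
Your proof is correct; let me compare it against what the paper actually does. The paper gives essentially no proof -- it simply asserts both inequalities $n \leq r f_r(Y) + 1$ and $f_r(Y) + r \leq n$ as ``obvious.'' Your lower bound argument (BFS ordering of facets via connectedness, each new facet contributes at most $r$ new vertices) is exactly the standard reading of the first asserted inequality. For the upper bound you supply a full proof where the paper supplies none, and your argument is sound: the structural claim that every non-isolated component of $Y - \sigma$ is itself a minimal connected cover is correct (purity passes to components of the pure complex $Y-\sigma$, and if $C_i - \tau$ stayed connected then $Y - \tau$ would too, since $\sigma$ still meets every component), and the ensuing count $f_r(Y) = 1 + \sum m_i \leq 1 + n - rk + (r-1)t$ together with $k \geq 2$, $k \geq t+1$, $t \leq r$ does yield $f_r(Y) \leq n - r$.

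That said, the upper bound also admits a shorter direct argument in the same spirit as your lower bound, which is presumably what the author regarded as obvious: order the facets $\sigma_1,\dots,\sigma_f$ arbitrarily and add them one at a time, tracking the number of connected components of the growing complex on vertex set $[n]$. Adding $\sigma_1$ drops the count from $n$ to $n-r$. For $i \geq 2$, if adding $\sigma_i$ did not reduce the component count, then all of $V(\sigma_i)$ would already lie in a single component of $\sigma_1 \cup \cdots \cup \sigma_{i-1}$, and one could reroute any path through $\sigma_i$ inside that component, so $Y - \sigma_i$ would remain connected -- contradicting minimality. Hence each of $\sigma_2,\dots,\sigma_f$ drops the count by at least one, giving $1 \leq n - r - (f-1)$, i.e. $f \leq n - r$. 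Your inductive route is a bit heavier machinery (and in passing proves a structural fact about components of $Y - \sigma$ that is not needed), but it is correct and self-contained; the one-pass counting argument is lighter and closer to the lower-bound argument you already gave.
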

\begin{proof}
This follows from the following obvious inequalities relating the number of vertices and the number of facets
$$n\leq r f_r(Y) + 1\quad\text{and}\quad f+r \leq n.$$
\end{proof}
\begin{definition}
Let $Y\in\mathcal{M}_r(n)$ and let $v\in V(Y)$ be a vertex. We say that $v$ is a \textit{leaf} if it is contained in a unique $r$-simplex that we call the \textit{branch}. We say that $v$ is an \textit{external leaf} if removing its branch from $Y$ leaves a unique connected component and isolated vertices.
\end{definition}

\begin{lemma}\label{lemleaf}
Every $Y\in\mathcal{M}_r(n)$ contains an external leaf.
\end{lemma}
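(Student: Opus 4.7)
My plan is to reduce the lemma to the classical graph-theoretic fact that any finite connected graph on at least two vertices has at least two non-cut vertices, by passing to an auxiliary graph $G = G(Y)$ whose vertex set is the set of facets of $Y$, with two facets joined by an edge precisely when they share at least one vertex of $Y$. I first check that $G$ is connected: purity of $Y$ forces every vertex of $Y$ to lie in some facet, and any edge-path in the $1$-skeleton of $Y$ (which exists by connectedness of $Y$) translates into a walk in $G$ since each consecutive pair of edges along the path lies in a pair of facets sharing a common endpoint.

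Assume $Y$ has at least two facets; the single-facet case is immediate. I then select a non-cut vertex $\sigma$ of $G$, so that $G - \sigma$ remains connected. The structural observation to verify is that the non-trivial connected components of $Y - \sigma$ correspond bijectively to the connected components of $G - \sigma$, while the remaining components of $Y - \sigma$ are precisely isolated vertices, one for each vertex of $Y$ that lies in $\sigma$ and in no other facet, i.e.\ one for each leaf of $Y$ with branch $\sigma$. This bijection follows cleanly from purity together with the definition of facet adjacency: two vertices outside the leaves-of-$\sigma$ set are joined in $Y - \sigma$ iff there is a sequence of facets avoiding $\sigma$, each pair sharing a vertex, joining the facets that contain them.

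From the two preceding observations, $Y - \sigma$ consists of exactly one non-trivial connected component together with some (possibly zero) isolated vertices. Since $Y$ is a minimal connected cover, $Y - \sigma$ must be disconnected, forcing at least one isolated vertex $v$ to exist. By construction $v$ is a leaf of $Y$ with branch $\sigma$, and the form of $Y - \sigma$ just described shows that $v$ is external. The only real obstacle is spelling out the structural bijection between the components of $Y - \sigma$ and those of $G - \sigma$, but this is essentially routine; everything else is a standard graph-theoretic fact (e.g.\ the endpoints of a longest path in $G$ are non-cut), so the conceptual core of the proof is packaging the minimal-connectivity of $Y$ into the implication that every non-cut facet of $G$ must contain a leaf.
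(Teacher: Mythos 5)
Your argument is correct, and it takes a genuinely different route from the paper. The paper argues by contradiction: assuming no external leaf exists, it inductively constructs paths of facets $\sigma_1,\dots,\sigma_k$ (consecutive facets intersecting, non-consecutive ones disjoint) of arbitrary length, and derives a contradiction from the bound $f_r(Y)\leq n-r$ of Lemma~\ref{lemfacetnumber}. You instead pass to the facet intersection graph $G(Y)$, invoke the classical fact that a finite connected graph on at least two vertices has a non-cut vertex, and then translate: the correspondence between components of $G-\sigma$ and non-trivial components of $Y-\sigma$ (with vertices lying only in $\sigma$ becoming isolated) shows that for a non-cut facet $\sigma$, the complex $Y-\sigma$ is one non-trivial component plus isolated vertices; minimal connectivity then forces an isolated vertex, which is an external leaf with branch $\sigma$. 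This buys you something: your proof does not need the facet-count lemma at all, it yields the stronger structural statement that \emph{every} non-cut facet of $G(Y)$ is the branch of an external leaf (hence, via two non-cut vertices, at least two such branches), and it is arguably tighter than the paper's induction, whose step ``if all neighbours of $\sigma_k$ connect to some $\sigma_i$ then either $\sigma_k$ is not required for path connectivity or $\sigma_k$ is a branch'' is left rather informal. Two small points to tidy when writing it up: the fact that every vertex lies in some facet follows from connectedness of $Y$ (an isolated vertex is allowed by purity under the paper's conventions, but not by connectedness for $n\geq 2$), not from purity alone; and in the single-facet case the ``unique connected component'' in the definition of external leaf degenerates, so you should say explicitly that this case is read as satisfying the definition (the paper's own proof glosses over the same degeneracy).
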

\begin{proof}
Suppose there are no external leaves. We will show by strong induction that this implies the existence of paths of $r$-simplexes of arbitrary length.

There certainly exists a path of length $1$, choose any facet in $Y$. Suppose there is a path of $r$-simplexes of length $k$ in $Y$, i.e. there exists $\sigma_1,\sigma_2,\dots,\sigma_k$ such that $\sigma_i\cap \sigma_{i+1}\neq \emptyset$ and $\sigma_i\cap\sigma_j = \emptyset$ for all $j\neq i, i+1$. If $\sigma_k$ had no neighbours except for $\sigma_{k-1}$ then $Y$ certainly contains an external leaf with branch $\sigma_k$. If all of the neighbours of $\sigma_k$ connect to some $\sigma_i$s then either $\sigma_k$ is not required for path connectivity or $\sigma_k$ is again a branch. This is a contradiction since $Y$ is a minimal connected cover, i.e. we are able to extend to a path of facets of length $k+1$. This holds true for $k > n-r$ which is a contradiction by Lemma~\ref{lemfacetnumber}, so $Y$ must contain an external leaf.
\end{proof}

\begin{lemma}\label{lem1skel}
The $1$-skeleton of a minimal connected cover determines it uniquely, i.e. if $Y,Z\in\mathcal{M}_r(n)$ with $Y^{(1)} = Z^{(1)}$ then $Y = Z$.
\end{lemma}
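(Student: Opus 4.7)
The plan is to proceed by induction on $n$. The base case $n = r+1$ is immediate, since the only minimal connected cover on $[r+1]$ is the full $r$-simplex.

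For the inductive step, I use Lemma~\ref{lemleaf} to produce an external leaf $v$ of $Y$ with branch $\sigma = \{v, v_1, \ldots, v_r\}$. The essential observation is that the notion of a leaf is entirely encoded by the 1-skeleton: a vertex $w$ of a pure $r$-dimensional simplicial complex is a leaf if and only if $\deg_{Y^{(1)}}(w) = r$, and in that case its branch is forced to be $\{w\} \cup N(w)$, where $N(w)$ denotes the neighbors of $w$ in $Y^{(1)}$. Since $Y^{(1)} = Z^{(1)}$, the vertex $v$ is also a leaf of $Z$ with branch $\sigma$, so in particular $\sigma$ is a facet of $Z$.

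Next I strip off $\sigma$ together with its isolated vertices from both complexes. Let $I = \{w \in \sigma : \deg(w) = r \text{ and } N(w) = \sigma - \{w\}\}$; this is the set of leaves of $Y$ (equivalently, of $Z$) whose branch is $\sigma$, and it is detected by the 1-skeleton. Set $V' = [n] - I$, and let $Y^* = (Y - \{\sigma\})|_{V'}$, with $Z^*$ defined analogously. I verify $Y^* \in \mathcal{M}_r(|V'|)$: connectedness follows because $v$ is external in $Y$; minimality follows because if some facet $\tau \neq \sigma$ could be removed from $Y^*$ without disconnecting it, then in $Y$ the facet $\sigma$ would glue the pieces back through the shared non-leaf vertices $\{v_1, \ldots, v_r\} \cap V'$, contradicting that removing $\tau$ from $Y$ disconnects $Y$.

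The crux is then showing $Y^{*(1)} = Z^{*(1)}$, which amounts to showing that the set of edges of $\sigma$ lying in no other facet is the same in $Y$ as in $Z$. This is exactly where the minimal-connected-cover condition is indispensable, since an arbitrary pure $r$-dimensional complex is not determined by its 1-skeleton. I expect to argue by contradiction: if some edge $e \subseteq \sigma$ lies in no other facet of $Y$ but does lie in some facet $\tau$ of $Z$ with $\tau \neq \sigma$, an exchange argument exploiting the minimality of both $Y$ and $Z$ should yield that $\tau$ must already appear in $Y$, a contradiction. Once this equality of 1-skeleta is secured, $Z^*$ is automatically connected and minimal, the inductive hypothesis gives $Y^* = Z^*$, and reattaching $\sigma$ together with the vertices of $I$ yields $Y = Z$. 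The exchange step needed to equate the \emph{private edge} sets is what I expect to be the hardest part of the argument.
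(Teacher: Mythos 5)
You acknowledge the decisive gap yourself: the entire content of the lemma is concentrated in showing $Y^{*(1)} = Z^{*(1)}$, i.e.\ that an edge $e\subseteq\sigma$ on the non-leaf vertices is private to $\sigma$ in $Y$ if and only if it is private to $\sigma$ in $Z$, and you leave this as a hoped-for ``exchange argument'' (``I expect to argue\dots should yield\dots a contradiction''). The surrounding scaffolding --- induction on $n$, producing an external leaf via Lemma~\ref{lemleaf}, observing that leaves and their branches are detected by vertex degree in the $1$-skeleton, stripping off $\sigma$ and the set $I$, and checking $Y^*\in\mathcal{M}_r(|V'|)$ --- is sound, but it is all preliminary. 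Without the exchange step the inductive hypothesis cannot be invoked (and note you would also still need to check that $Z^*$ is \emph{minimal}, not merely connected, before applying it). As written this is a reduction, not a proof.

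It is also worth noting that the paper's proof does not use induction at all, and the inductive machinery you build doesn't actually make the hard step any easier. The paper picks a facet $\sigma\in Z\setminus Y$ directly, shows $\sigma$ cannot be a branch of $Z$ (a branch is witnessed by a degree-$r$ vertex, and since $Y$ is pure $r$-dimensional with the same $1$-skeleton that vertex would force $\sigma\in Y$ --- exactly your degree observation), concludes that removing $\sigma$ from $Z$ disconnects it without creating isolated vertices, hence that $\sigma$ has an edge lying in no other facet of $Z$ (otherwise $(Z-\sigma)^{(1)}=Z^{(1)}$ would still be path connected), and then argues this is incompatible with $Y^{(1)}=Z^{(1)}$. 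Your missing exchange step is precisely this last ``private edge'' assertion, relocated inside the inductive peel-off. If you can make that step rigorous you should drop the induction entirely and argue in place as the paper does: you will find you never needed the external leaf, the set $I$, nor the verification that $Y^*$ is a minimal connected cover.
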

\begin{proof}
Suppose $Y^{(1)} = Z^{(1)}$ but $Y\neq Z$, then there exists a facet $\sigma$ which is in $Z$ but is not in $Y$. Such a $\sigma$ cannot be a branch in $Z$ otherwise it would have to be a branch in $Y$ as well, so $\sigma$ must be necessary for the connectivity of $Z$. That is, removing $\sigma$ from $Z$ must disconnect it and leave no isolated vertices. This can only occur if $\sigma$ contains at least one edge which is not contained within any other facet. This cannot happen by the assumption that our $1$-skeletons are the same.
\end{proof}
\section{Lower Bound}
\begin{definition}
We call a minimal connected cover $Y\in\mathcal{M}_r(n)$ \textit{treelike} if it is contractible and for every pair of distinct facets $\sigma,\tau$ one has $|\sigma\cap\tau| \leq 1$.
\end{definition}
The following is a restatement of Lemma 3.11 from Schmidt-Pruzan and Shamir \cite{schmidt}.
\begin{lemma}\label{lemhypertree}
Suppose $n = kr + 1$ for some integer $k$. Then the number of $Y\in \mathcal{M}_r(n)$ such that $Y$ is treelike equals
$$\dfrac{(n-1)!\cdot n^{k-1}}{k!\cdot r!^k}.$$
\end{lemma}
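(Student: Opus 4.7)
My approach is to recognise the class being enumerated as the $(r+1)$-uniform hypertrees of \cite{schmidt} and then to apply (or outline a proof of) their classical enumeration. The first task is to pin down the number of facets by showing that every treelike $Y \in \mathcal{M}_r(n)$ with $n = kr+1$ has exactly $f_r(Y) = k$ facets, and moreover that every vertex lies in at most two facets. To this end, define the \emph{dual graph} $D(Y)$ whose nodes are the facets of $Y$ and whose edges are pairs sharing a vertex. Any cycle in $D(Y)$ would produce a nontrivial $1$-cycle in the $1$-skeleton of $Y$ (since pairwise facet intersections are single vertices), contradicting contractibility, while connectivity of $Y$ gives connectivity of $D(Y)$. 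Hence $D(Y)$ is a tree on $f_r(Y)$ nodes with $f_r(Y)-1$ edges. Writing $d_v$ for the number of facets containing $v$, the pairwise intersection bound forces
$$\sum_{v \in [n]}\binom{d_v}{2} = f_r(Y)-1 \quad \text{and} \quad \sum_{v \in [n]}(d_v-1) = (r+1)f_r(Y) - n.$$
Combined with the lower bound $f_r(Y) \geq k$ from Lemma~\ref{lemfacetnumber} and the inequality $\binom{d}{2} \geq d-1$ (strict for $d \geq 3$), these identities force $f_r(Y) = k$ and $d_v \in \{1,2\}$ for every $v$.

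With the structural parameters in hand, I would implement a Prüfer-style encoding of treelike $Y \in \mathcal{M}_r(n)$. By Lemma~\ref{lemleaf} an external leaf always exists, so there is always a ``leaf facet'' attached to the rest of the complex at a single vertex. Peel the leaf facet whose smallest-labelled private vertex is minimal, record its attachment vertex, and delete it together with its $r$ private vertices; iterate $k-1$ times until a single facet remains. The code produced is a pair consisting of an attachment sequence $(a_1,\ldots,a_{k-1}) \in [n]^{k-1}$ together with an unordered partition of $[n]$ minus a distinguished vertex into $k$ blocks of size $r$ recording the private vertices peeled at each step (together with the non-distinguished vertices of the final facet). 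Counting codes yields
$$n^{k-1} \cdot \frac{(n-1)!}{(r!)^k\, k!} = \frac{(n-1)!\,n^{k-1}}{k!\,(r!)^k},$$
matching the claim. A sanity check: for $r=1$ the partition is trivial and one recovers $n^{n-2}$, consistent with Cayley.

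The main obstacle in a fully self-contained proof is verifying that the Prüfer-like encoding is genuinely a bijection, i.e.\ that the attachment sequence and the partition of private vertices together suffice to uniquely reconstruct $Y$. The reconstruction mirrors the classical Prüfer recovery for ordinary trees, with the minimum-private-label rule providing the canonical ordering of the peeled blocks; the pairwise-intersection bound $d_v \le 2$ established in the first step is exactly what allows the reverse process (``attach a new facet at $a_i$ with the next private block'') to go through without ambiguity. Since the statement is presented as a restatement of Lemma~3.11 of \cite{schmidt}, I would ultimately defer the fine combinatorial bookkeeping to their argument, contenting myself with the translation from hypergraphs to simplicial complexes carried out in the first paragraph.
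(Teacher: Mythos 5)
The paper offers no proof for this lemma; it is presented verbatim as a restatement of Lemma~3.11 of Schmidt-Pruzan and Shamir \cite{schmidt}, and you also ultimately defer to that reference. The issue is that the structural sketch you supply on the way is wrong. Your key claims --- that the dual graph $D(Y)$ is a tree, that $\sum_v \binom{d_v}{2} = f_r(Y)-1$, and that $d_v\le 2$ for every vertex --- are all false. For a counterexample take $r=2$, $n=7$, and let $Y$ be the ``star'' of three triangles $\{v,a_i,b_i\}$ ($i=1,2,3$) all sharing only the central vertex $v$. This is a treelike minimal connected cover: it is connected, removing facet $i$ isolates $a_i,b_i$, it is contractible (a wedge of three disks), and each pairwise facet intersection is exactly $\{v\}$. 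Yet its dual graph is $K_3$, one has $\sum_v\binom{d_v}{2}=\binom{3}{2}=3\neq 2 = f_r(Y)-1$, and $d_v=3$. The gap in your cycle argument is that a cycle in $D(Y)$ need not lift to a nontrivial $1$-cycle in $Y^{(1)}$: when the shared vertices around the dual cycle coincide, the induced loop is constant, so contractibility is not contradicted.

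The conclusion $f_r(Y)=k$ is still correct, but one should derive it from contractibility via Euler characteristic rather than from $D(Y)$ being a tree. Because pairwise facet intersections are single vertices, every face of dimension $\geq 1$ lies in a unique facet, so $f_j(Y)=f_r(Y)\binom{r+1}{j+1}$ for $j\geq 1$ and hence $\chi(Y)=n-r\cdot f_r(Y)$; contractibility gives $\chi(Y)=1$ and thus $f_r(Y)=(n-1)/r=k$. No bound $d_v\le 2$ follows, and the star shows none holds, so a Pr\"ufer-style peeling cannot lean on it: attachment vertices can have arbitrarily large degree. The step you flag as the remaining obstacle --- checking that the attachment sequence together with the block partition reconstructs $Y$ uniquely --- is therefore genuinely the whole content of the Schmidt-Pruzan--Shamir argument, and the mechanism you proposed for discharging it (the supposed $d_v\le 2$ bound) is not available.
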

This result together with the following Lemma stating that $M_r(n)$ is a non-decreasing function of $n$ will give our lower bound.
\begin{lemma}\label{lemmonotonic}
$M_r(n) \leq M_r(n+1)$.
\end{lemma}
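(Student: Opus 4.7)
The plan is to exhibit an explicit injection $\Phi\colon\mathcal{M}_r(n)\to\mathcal{M}_r(n+1)$. Given $Y\in\mathcal{M}_r(n)$, Lemma~\ref{lemleaf} provides an external leaf $v$; I would fix a canonical such choice (say, the one of smallest label) and let $\sigma=\{v,w_1,\dots,w_r\}$ be its branch. I then swap $v$ for the fresh vertex $n+1$, setting
\[
\tau:=\{w_1,\dots,w_r,n+1\}\quad\text{and}\quad \Phi(Y):=Y\cup\tau.
\]
The resulting complex is pure $r$-dimensional on $[n+1]$ and plainly connected, since $\tau$ shares the $(r-1)$-face $\{w_1,\dots,w_r\}$ with $Y$.

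The bulk of the verification is to check that $\Phi(Y)$ is still a minimal connected cover, i.e.\ that removing any facet disconnects it. I would split into three cases. Removing $\tau$ isolates the vertex $n+1$. Removing the old branch $\sigma$ leaves $v$ isolated, because $v$ was a leaf of $Y$ (only in $\sigma$) and does not appear in $\tau$. The delicate case is removing some other facet $\sigma'\neq\sigma,\tau$: here I would use that $\sigma$ still lies in $Y-\sigma'$, so its vertices $w_1,\dots,w_r$ all sit in a single component of $Y-\sigma'$; adjoining $\tau$ therefore only attaches the single new vertex $n+1$ to that component and cannot bridge any two of the (at least two) components that $Y-\sigma'$ already has.

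Injectivity is immediate: since $n+1$ was introduced only through $\tau$, it appears in a unique facet of $\Phi(Y)$, so $Y$ is recovered as the induced subcomplex of $\Phi(Y)$ on $[n]$. I expect the main (still mild) obstacle to be precisely the third sub-case above, where one must rule out that $\tau$ silently reconnects previously disconnected pieces of $Y-\sigma'$; the key point that makes the argument go through is that $w_1,\dots,w_r$ all cohabit the facet $\sigma$, which survives the removal of $\sigma'$.
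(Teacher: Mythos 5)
Your proposal is correct and is essentially the paper's own argument: attach a new facet on the branch of a (canonically chosen) external leaf with the leaf vertex replaced by $n+1$, and recover $Y$ from the image to get injectivity. The only difference is that you spell out the case analysis verifying $\Phi(Y)\in\mathcal{M}_r(n+1)$, which the paper dismisses as clear.
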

\begin{proof}
Let $Y\in\mathcal{M}_r(n)$, there exists an external leaf in $Y$ by Lemma~\ref{lemleaf}. Let $v$ be the smallest leaf in $Y$ with branch $\sigma$ with vertices $\{v,v_1,\dots, v_r\}$. Let $\sigma'$ denote a new $r$-simplex on vertex set $\{v_1,\dots,v_r, n+1\}$ and define a new simplicial complex $Y' = Y\cup \sigma'$.

It's clear that $Y'\in\mathcal{M}_r(n+1)$. Moreover one sees that $Y' = Z'$ if and only if $Y = Z$. We have therefore constructed an injective map $\mathcal{M}_r(n)\to\mathcal{M}_r(n+1)$ which proves the lemma.
\end{proof}

\begin{prop}\label{prop:lowerbound}
There exists a constant $A > 0$ such that
$$M_r(n) \geq A^n\cdot n^n.$$
\end{prop}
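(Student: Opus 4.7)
The plan is to combine the exact count in Lemma~\ref{lemhypertree} (treelike case) with the monotonicity Lemma~\ref{lemmonotonic} to promote the specific bound at $n = kr+1$ into a bound valid for all $n$.

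First, treelike minimal connected covers form a subset of $\mathcal{M}_r(n)$, so whenever $n = kr+1$ Lemma~\ref{lemhypertree} gives
\[
M_r(n) \;\geq\; \frac{(n-1)!\,n^{k-1}}{k!\,r!^{k}}.
\]
I would then apply Stirling's formula $m! \sim \sqrt{2\pi m}(m/e)^m$ to both $(n-1)!$ and $k!$, and use $n/k \to r$ as $k\to\infty$. A direct computation then produces
\[
\frac{(n-1)!\,n^{k-1}}{k!\,r!^{k}} \;\sim\; \frac{1}{\mathrm{poly}(n)}\cdot n^n \cdot \left(\frac{(re/r!)^{1/r}}{e}\right)^{n},
\]
so that for the constant $C_r := e^{-1}(re/r!)^{1/r}>0$ one has $M_r(n) \geq n^n C_r^n /\mathrm{poly}(n)$ whenever $n \equiv 1 \pmod{r}$.

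Second, to handle arbitrary $n$, I would use Lemma~\ref{lemmonotonic}: given $n$, let $n' = rk+1$ be the largest such integer with $n' \leq n$, so $c := n-n' \in \{0,1,\dots,r-1\}$ is bounded by a constant. Then $M_r(n) \geq M_r(n') \geq (n')^{n'} C_r^{n'}/\mathrm{poly}(n')$. The comparison
\[
\frac{(n')^{n'}}{n^n} \;=\; \Bigl(1-\tfrac{c}{n}\Bigr)^{n-c}\cdot n^{-c} \;\geq\; \frac{e^{-(r-1)}}{n^{r-1}}
\]
for $n$ large introduces only a polynomial loss. Absorbing both $\mathrm{poly}(n)$ factors into the exponential (i.e.\ replacing $C_r$ by any $A < C_r$), we obtain $M_r(n) \geq A^n n^n$ for all sufficiently large $n$. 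Shrinking $A$ further, if necessary, takes care of the finitely many small values of $n \geq r+1$ where $M_r(n) \geq 1$.

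The main (but routine) obstacle is the bookkeeping in the Stirling estimate: one must keep track of the polynomial factors carefully to ensure that the rate $C_r$ is genuinely positive and that the slack $n - n' \leq r-1$ introduced by monotonicity contributes only a polynomial correction, hence is swallowed into any constant $A < C_r$.
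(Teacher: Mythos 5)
Your proposal is correct and follows essentially the same route as the paper: lower bound $M_r(n)$ at arithmetic-progression values $n'=kr+1$ using the treelike count of Lemma~\ref{lemhypertree}, bridge to general $n$ via the monotonicity of Lemma~\ref{lemmonotonic}, and absorb the bounded gap $c=n-n'\leq r-1$ and the polynomial Stirling factors into a slightly smaller base. The only difference is cosmetic: you carry out Stirling precisely to extract the sharp rate $C_r=e^{-1}(re/r!)^{1/r}$, whereas the paper uses the cruder one-line bound $(n-c)!\geq((n-c)/e)^{n-c}$ and settles for any $A<\frac{1}{2er!}$, which suffices since the statement only asks for existence of $A>0$.
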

\begin{proof}
Let $n = kr + c$ for $c = 0,1,\dots, r-1$. Then by Lemma~\ref{lemmonotonic} and Lemma~\ref{lemhypertree}
\begin{align*}
M_r(n) \geq \dots \geq M_r(n - c + 1) &\geq \dfrac{(n-c)!\cdot (n-c+1)^{k-2}}{(k-1)!\cdot r!^{k-1}}\\
&\geq \left(\dfrac{n-c}{e\cdot r!}\right)^{n-c}\\
&\geq A^n \cdot n^n.
\end{align*}
Where in the final inequality we may take $A$ to be any constant less than $\frac{1}{2e r!}$.
\end{proof}
\section{Upper Bound}
\begin{definition}
Given an integer $r\geq 1$ an \textit{$r$-tree} is a graph which is defined inductively as follows:
\begin{itemize}
\item The complete graph on $r$ vertices $K_r$ is an $r$-tree.
\item Let $G$ be an $r$-tree on $n$ vertices, one may construct a new $r$-tree $G'$ on $n+1$ vertices by connecting a new vertex to any $r$ vertices that form a clique in $G$.
\end{itemize}
Any subgraph of an $r$-tree is called a \textit{partial $r$-tree}.

The following is a result of Beineke and Pippert \cite{beineke} for the enumeration of $r$-trees.
\begin{theorem}\label{thmrtrees}
There are $\binom{n}{r}\cdot\left[r(n-r)+1\right]^{n-r-2}$ labelled $r$-trees on $n$ vertices.
\end{theorem}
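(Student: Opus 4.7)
The plan is to generalize the classical Prüfer bijection. The key structural observation I would establish first is that every $r$-tree on more than $r$ vertices contains at least two \emph{simplicial vertices}: vertices $v$ of degree exactly $r$ whose neighborhood forms an $r$-clique. This follows by induction on the inductive construction of $r$-trees: the most recently added vertex is always simplicial, and deleting any simplicial vertex yields an $r$-tree on one fewer vertex (the remaining structure can still be built by the same inductive rule). Along the way one also verifies the clique count: any $r$-tree on $m$ vertices has exactly $r(m-r)+1$ distinct $r$-cliques, since each added vertex contributes $r$ new $r$-cliques to the previous total.

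Next I would define a Prüfer-like encoding. Given an $r$-tree $G$ on $[n]$, iteratively locate the smallest-labeled simplicial vertex $v$, record the $r$-clique $N(v)$, and delete $v$. Continuing until only $r+2$ vertices remain produces a sequence of $r$-cliques of length $n-r-2$, together with the leftover $r$-tree on $r+2$ vertices. The main thing to verify is that this process is reversible: reading the sequence from right to left, one can reinsert the deleted vertices in order, using the fact that at each reconstruction step the vertex to reinsert is the smallest label not yet present that is consistent with being simplicial at that stage.

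For the counting, I would argue that the leftover $r$-tree on $r+2$ vertices is determined by a choice of an $r$-subset of $[n]$ (the "base" clique that anchors the reconstruction), giving the factor of $\binom{n}{r}$. For each entry of the Prüfer sequence, the recorded $r$-clique is an element of the set of $r$-cliques present in the corresponding partial $r$-tree; using the clique count formula above and a relabeling that re-indexes the entries against the \emph{final} set of $r(n-r)+1$ cliques of the full $r$-tree, each of the $n-r-2$ entries contributes a factor of $r(n-r)+1$. Multiplying yields the claimed total.

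The main obstacle is the re-parameterization step: the set of available $r$-cliques genuinely changes at each stage of the peeling, so one cannot naively say the sequence has entries in a fixed set of size $r(n-r)+1$. The cleanest route is a direct bijection with sequences of labels drawn from a canonically chosen set of $r(n-r)+1$ tokens, where the tokens are decoded on the fly during reconstruction; alternatively one can invoke a generating-function argument using Lagrange inversion on the species of $r$-trees. For $r=1$ this subtlety disappears because every vertex is trivially an available "$1$-clique," so the argument collapses to the familiar Prüfer proof of Cayley's formula, which is a helpful sanity check that the formula reduces correctly.
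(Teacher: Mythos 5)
The paper does not actually prove this theorem; it is quoted from Beineke and Pippert \cite{beineke} as known input, so there is no in-house argument to compare against, and I am assessing your sketch on its own. Your structural preliminaries are sound: every $r$-tree on more than $r$ vertices has at least two simplicial vertices, deleting a simplicial vertex yields an $r$-tree, and an $r$-tree on $m$ vertices has exactly $r(m-r)+1$ distinct $r$-cliques. A Pr\"{u}fer-type peeling is indeed the right framework.

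However, there is a genuine gap that you flag yourself but do not close, and it is the whole difficulty. During the peeling, each recorded $r$-clique is a clique of a successively smaller partial $r$-tree, so the ``alphabet'' for the sequence shrinks at every step; converting this variable-alphabet sequence into a sequence over a fixed alphabet of size $r(n-r)+1$ is precisely the technical content of the Beineke--Pippert/Foata theorem, and neither ``decoding tokens on the fly'' nor ``Lagrange inversion on the species of $r$-trees'' is carried out or even set up. In addition, the proposed source of the factor $\binom{n}{r}$ is incorrect: the residual $r$-tree on $r+2$ vertices is \emph{determined} by the peeling rather than freely chosen, and even setting that aside, on a fixed $(r+2)$-set the $r$-trees are exactly the graphs $K_{r+2}$ minus one edge, of which there are $\binom{r+2}{2}$, not one per $r$-subset of $[n]$; so the bookkeeping does not produce $\binom{n}{r}$ as claimed. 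Finally, the $r=1$ sanity check does not in fact collapse to the classical Pr\"{u}fer proof under your stopping rule: halting at $r+2=3$ vertices yields a length-$(n-3)$ sequence plus a residual labelled $3$-vertex tree, and reconciling that with the usual length-$(n-2)$ code requires exactly the re-indexing you have not supplied. As written you have an injective peeling map, not a proof of the stated count.
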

\end{definition}
\begin{prop}\label{propmccrtree}
If $Y\in\mathcal{M}_r(n)$ then $Y^{(1)}$ is a partial $r$-tree.
\end{prop}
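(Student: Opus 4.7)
The natural approach is induction on the number of facets $k = f_r(Y)$. The base case $k = 1$ is immediate: $Y^{(1)} = K_{r+1}$ is itself an $r$-tree, built by attaching a single new vertex to $K_r$.

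For the inductive step, I would use Lemma~\ref{lemleaf} to produce an external leaf $v$ with branch $\sigma = \{v, v_1, \ldots, v_r\}$. Let $U \subseteq \{v_1, \ldots, v_r\}$ consist of the vertices of $\sigma$ belonging to no other facet of $Y$, and set $W = \{v_1, \ldots, v_r\} \setminus U$. Removing $\sigma$ together with the resulting isolated vertices $\{v\} \cup U$ yields a pure, connected subcomplex $Y'$ with $k-1$ facets. I would first verify that $Y'$ is itself a minimal connected cover: pureness and connectedness are immediate from the external-leaf hypothesis, and if some facet $\tau$ of $Y'$ had $Y' \setminus \tau$ still connected, then since $W \neq \varnothing$ (at least one $v_i$ must be shared because $k \geq 2$ and $Y$ is connected), the simplex $\sigma$ would reconnect the pieces of $Y \setminus \tau$, contradicting $Y \in \mathcal{M}_r(n)$. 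By the inductive hypothesis $(Y')^{(1)} \subseteq T'$ for some $r$-tree $T'$.

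To extend $T'$ to an $r$-tree $T$ on $[n]$ containing $Y^{(1)}$, I would adjoin the vertices of $\{v\} \cup U$ one at a time. Writing $U = \{u_1, \ldots, u_s\}$, adjoin $u_i$ connected to an $r$-clique $C_i \supseteq W \cup \{u_1, \ldots, u_{i-1}\}$ padded by filler vertices drawn from $T'$, and finally adjoin $v$ connected to the $r$-clique $W \cup U$. Each new vertex is joined exactly to its $\sigma$-neighbours, so $T$ acquires the entire $K_{r+1}$ on $\sigma$, while all remaining edges of $Y^{(1)}$ already sit inside $T' \subseteq T$.

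The main obstacle is ensuring that each attachment set $C_i$ is a genuine $r$-clique of the current tree. The edges of $W$ inherited from $\sigma$ need not appear in $(Y')^{(1)}$, so an arbitrarily chosen $T'$ need not render $W$ a clique. To circumvent this I would strengthen the inductive statement, asserting that one can always choose the $r$-tree $T$ containing $Y^{(1)}$ so that a distinguished frontier set $W$ arising from the inductive reduction lies inside a single $(r+1)$-clique of $T$; the filler vertices for each $C_i$ are then drawn from that clique. Verifying that this strengthening propagates consistently through the induction — tracking how successive frontiers $W$ arise as one peels off external leaves — is the delicate technical step at the heart of the proof.
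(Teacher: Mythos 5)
Your overall strategy — induct by peeling off an external leaf's branch $\sigma$, show the reduced complex $Y'$ is again a minimal connected cover, and then extend an $r$-tree $T' \supseteq (Y')^{(1)}$ by re-attaching the removed vertices — is exactly the paper's. You have also put your finger on precisely the point that makes the argument nontrivial: the shared vertex set $W = \sigma \cap Y'$ is where the new vertices must be attached, and one needs the attachment set at each step to be an $r$-clique of the $r$-tree built so far. Identifying this is real progress.

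However, your proposal does not close that gap. The ``strengthened induction hypothesis'' you sketch — that $T'$ can always be chosen so that a distinguished frontier set sits inside a single $(r+1)$-clique — is not formulated precisely, and you explicitly defer the ``delicate technical step'' of checking that it propagates. As written, this leaves the proof incomplete. The paper's resolution is more direct and is the idea you are missing: $\tau := W$ is not an arbitrary set of vertices but a face of some facet $\sigma'$ of $Y'$, and one should choose an $(r-1)$-dimensional face $\tau'$ of $\sigma'$ with $\tau \subseteq \tau'$. Because $\tau'$ is a simplex of $Y'$, its $r$ vertices automatically span an $r$-clique inside $(Y')^{(1)} \subseteq T'$ — no strengthening of the hypothesis is needed. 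One then attaches the removed vertices $v_1, \ldots, v_k$ one at a time: $v_1$ to all of $\tau'$, and each subsequent $v_j$ to $v_1, \ldots, v_{j-1}$ together with $r - j + 1$ vertices of $\tau'$ chosen to include $\tau$. Each attachment set is an $r$-clique of the current graph (since $\tau' \cup \{v_1, \ldots, v_{j-1}\}$ is a clique after step $j-1$), so the result is an $r$-tree, and by construction it contains every edge of $\sigma^{(1)}$ as well as $(Y')^{(1)}$. In short: you correctly located the obstruction, but the resolution is to pad $W$ to a genuine $(r-1)$-simplex of $Y'$ rather than to pad it with ``filler vertices drawn from $T'$'' and then try to legislate, through a stronger induction, that the filler forms a clique.
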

\begin{proof}
We want to show that there exists an $r$-tree $G$ on $[n]$ such that $Y^{(1)}\subset G$.

We will prove this by strong induction on the number of vertices. If $Y\in\mathcal{M}_r(r+1)$ then $Y^{(1)} = K_{r+1}$ the complete graph, which is an $r$-tree. 

Now suppose that the $1$-skeleton of every minimal connected cover on less than $n$ vertices is a partial $r$-tree. Let $Y\in\mathcal{M}_r(n)$, by Lemma~\ref{lemleaf} there exists some external leaf with branch $\sigma$. When we remove $\sigma$ from $Y$ we are left with some $Y'\in\mathcal{M}_r(n-k)$ and $k$ isolated vertices for some $k = 1,\dots, r$ the number of leaves in the branch $\sigma$. Let $\tau = Y'\cap \sigma$ be the simplex of dimension $r-k$ and note that there exists an $(r-1)$-dimensional simplex $\tau'\in Y'$ with $\tau\subset \tau'$.

By our inductive hypothesis there exists an $r$-tree, $G$, on $n-k$ vertices such that $Y'^{(1)}\subset G$. If $k = 1$ then $G_1 = G\cup \sigma^{(1)}$ is an $r$-tree such that $Y^{(1)}\subset G_1$. If $k > 1$ let $\{v_1,\dots,v_k\}$ be the set of leaves and construct a new graph from $G$ as follows:
\begin{itemize}
\item Connect $v_1$ to all of the vertices in $\tau'$.
\item Connect $v_2$ to $v_1$ and any $r-1$ vertices in $\tau'$.
\item ...
\item Connect $v_j$ to all vertices $v_1,v_2,\dots, v_{j-1}$ and any $r-j+1$ vertices in $\tau'$.
\end{itemize}
Note that at each stage the new edges that are added ensure the graph is an $r$-tree. Moreover, the graph constructed at the $k$th step certainly contains $Y'^{(1)}\cup\sigma^{(1)}$ and thus it contains $Y^{(1)}$.
\end{proof}

\begin{figure}[h]
\centering
\begin{subfigure}{0.5\textwidth}
  \centering
  \includegraphics[width=.7\linewidth]{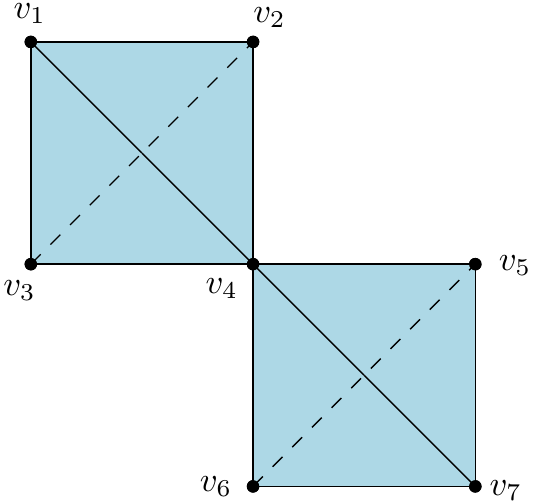}
\end{subfigure}
\begin{subfigure}{1\textwidth}
  \centering
  \includegraphics[width=0.7\linewidth]{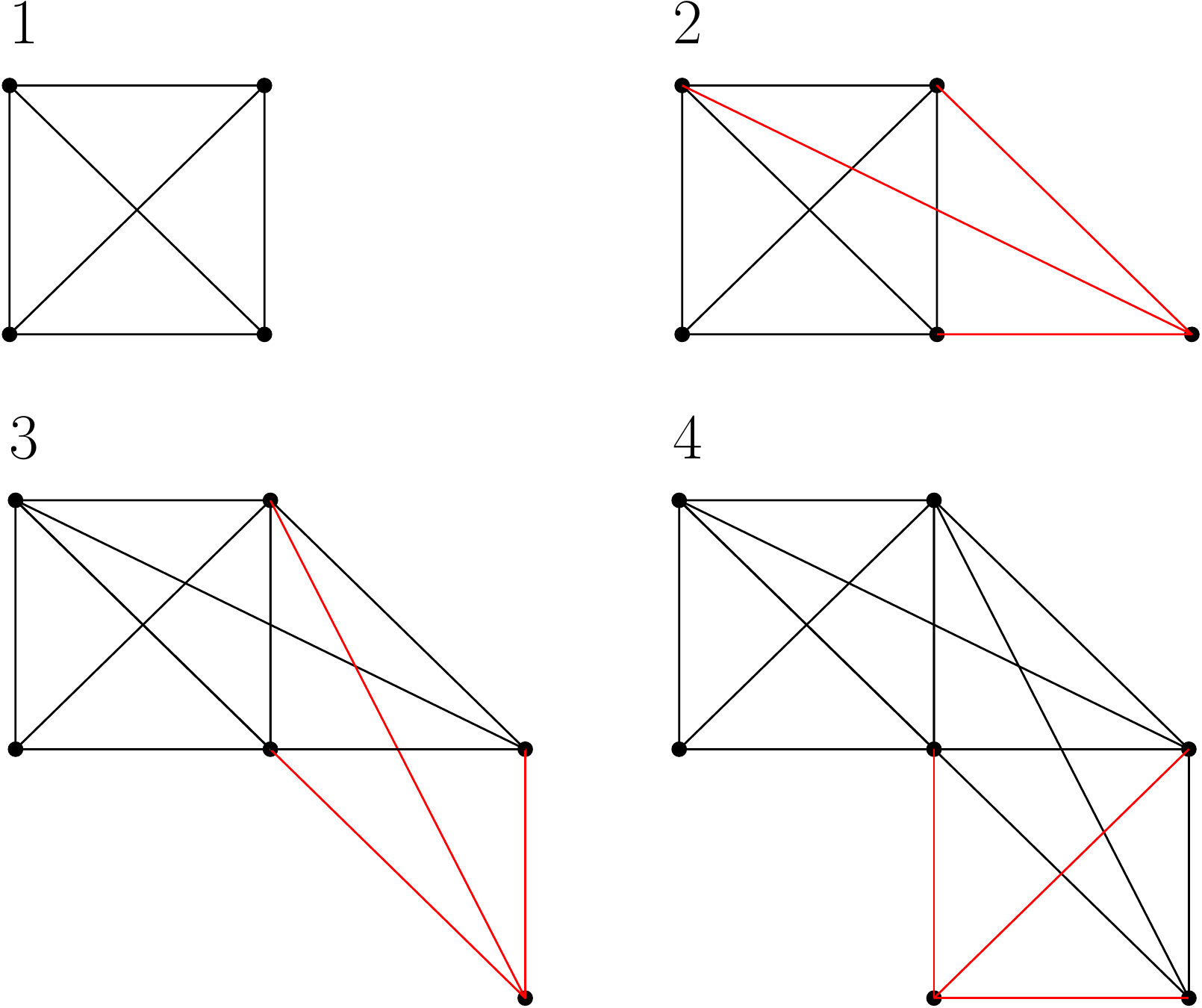}
\end{subfigure}
\caption{Shows the process described in the proof of Proposition~\ref{propmccrtree}. It shows that the $1$-skeleton of some $Y\in\mathcal{M}_3(7)$ (shown on the top) is a partial $3$-tree by removing the branch $\sigma = [v_4,v_5,v_6,v_7]$ and doing the described process with $\tau = [v_4]$ and $\tau' = [v_1,v_2,v_4]$.}
\label{fig:partial}
\end{figure}

\begin{corollary}\label{cor:upperbound}
There exists a constant $B>0$ such that
$$M_r(n)\leq B^n\cdot n^n.$$
\end{corollary}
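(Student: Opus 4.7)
The plan is to combine three earlier ingredients in the excerpt. By Lemma~\ref{lem1skel}, the assignment $Y\mapsto Y^{(1)}$ is injective on $\mathcal{M}_r(n)$, and by Proposition~\ref{propmccrtree}, each such $1$-skeleton is a partial $r$-tree with vertex set $[n]$. Hence $M_r(n)$ is bounded above by the number of partial $r$-trees on $[n]$, and the task reduces to estimating that count.

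To bound the number of partial $r$-trees, I would use the crude observation that every partial $r$-tree sits inside some $r$-tree on $[n]$, giving
$$\#\{\text{partial }r\text{-trees on }[n]\}\;\leq\;\#\{r\text{-trees on }[n]\}\cdot\max_{T}2^{|E(T)|}.$$
This overcounts, since a given partial $r$-tree may be realised as a subgraph of many different $r$-trees, but harmlessly so for an upper bound. Unwinding the inductive definition, an $r$-tree on $n$ vertices has exactly $\binom{r}{2}+r(n-r)$ edges: the seed $K_r$ contributes $\binom{r}{2}$ edges, and each of the $n-r$ subsequently attached vertices contributes exactly $r$ new edges. Plugging this together with Theorem~\ref{thmrtrees} yields
$$M_r(n)\;\leq\;\binom{n}{r}\cdot[r(n-r)+1]^{n-r-2}\cdot 2^{\binom{r}{2}+r(n-r)}.$$

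The remaining step is purely bookkeeping: massage the right-hand side into the shape $B^n\cdot n^n$. Using $\binom{n}{r}\leq n^r$, $r(n-r)+1\leq rn$, and $n-r-2\leq n$, the middle factor is at most $(rn)^n=r^n\cdot n^n$, while the last factor is at most $2^{\binom{r}{2}}\cdot(2^r)^n$. Collecting,
$$M_r(n)\;\leq\;2^{\binom{r}{2}}\cdot n^r\cdot (r\cdot 2^r)^n\cdot n^n,$$
and since $r$ is fixed the polynomial prefactor $n^r$ is eventually dominated by $C^n$ for any $C>1$. Absorbing these constants into the base produces $M_r(n)\leq B^n\cdot n^n$ for all sufficiently large $n$ (and then for all $n$ by enlarging $B$ to handle the finitely many small cases).

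There is no real obstacle here; the conceptual work is entirely carried out by Lemma~\ref{lem1skel}, Proposition~\ref{propmccrtree}, and Theorem~\ref{thmrtrees}, and what remains is just deciding how much slack to give up in each inequality. The only mild subtlety is the overcounting in the first estimate, but as noted it only costs a factor that is exponential in $n$ with fixed base, which is freely absorbed into the final constant $B$.
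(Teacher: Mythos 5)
Your proposal follows the paper's proof exactly: it reduces to counting partial $r$-trees via Lemma~\ref{lem1skel} and Proposition~\ref{propmccrtree}, bounds that count by (number of $r$-trees) $\times\ 2^{\binom{r}{2}+r(n-r)}$ using Theorem~\ref{thmrtrees}, and then absorbs everything into $B^n\cdot n^n$. If anything, you are slightly more careful than the paper, which asserts this product \emph{equals} the number of partial $r$-trees, whereas (as you correctly note) it is only an upper bound because subgraphs can be counted multiply across different ambient $r$-trees.
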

\begin{proof}
Combining Lemma~\ref{lem1skel} and Proposition~\ref{propmccrtree} gives an injective map from $\mathcal{M}_r(n)$ to the set of partial $r$-trees on $n$ vertices, so we just need a bound on the size of this set and we are done.

It's clear that a partial $r$-tree on $n$ vertices has $\binom{r}{2} + r(n-r)$ edges. Therefore by Theorem~\ref{thmrtrees} we know that the number of partial $r$-trees on $n$ vertices equals
\begin{align*}
2^{\binom{r}{2} + r(n-r)}\cdot \binom{n}{r}\cdot\left[r(n-r)+1\right]^{n-r-2}
&\leq 2^{\binom{r}{2}}\cdot \left(2^r\cdot r\cdot n\right)^{n-2}\\
&\leq B^n\cdot n^n.
\end{align*}
Where in the final inequality we may take $B$ to be any constant greater than $2^{\binom{r+1}{2}}\cdot r$.
\end{proof}
\section{Application: The Threshold for Path Connectivity of Pure Random Simplicial Complexes}
In this section we consider an extended example that utilises the bound for the number of minimal connected covers found in Corollary~\ref{cor:upperbound}. The following is a classical result of Erd\H{o}s and R\'{e}nyi \cite{erdos} about connectivity in random graphs.
\begin{theorem}\label{thm:erdrenconn}
If $G = G(n,p)$ is an Erd\H{o}s-R\'{e}nyi random graph then $$p = \dfrac{\frac{1}{2}\log n}{n}$$ is the threshold probability for $G$ to have a unique connected component and isolated vertices.
\end{theorem}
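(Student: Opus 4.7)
The plan is to follow the classical Erdős-Rényi template: a second moment calculation shows that small non-trivial components survive below the threshold, while a first moment argument, with Cayley's formula as its key input, rules them out above. The property that $G$ has at most one non-trivial connected component fails precisely when $G$ contains two disjoint non-trivial components. Since $p \gg 1/n$ throughout the relevant range, the graph already has a unique giant component with high probability, so the question reduces to whether a further non-trivial component of size $2 \le k \le \lfloor n/2 \rfloor$ coexists with the giant.

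For the subcritical direction, $p = c(\log n)/n$ with $c < 1/2$, I would show that isolated edges, i.e.\ edges $\{u,v\} \in E(G)$ such that no other edge of $G$ touches $u$ or $v$, exist with high probability. If $X$ counts such edges then $\Bbb E[X] = \binom{n}{2}\, p\, (1-p)^{2(n-2)}$, which is of order $n^{1-2c}\log n \to \infty$. A standard variance estimate, using that the covariance between two candidate isolated edges on disjoint vertex sets is asymptotically negligible, together with Chebyshev's inequality gives $\Bbb P(X=0) \to 0$, producing a non-trivial component of size $2$ disjoint from the giant and thus violating the property. For the supercritical direction $c > 1/2$, I would apply the union bound over all possible candidate components: any non-trivial component of size $k$ must contain a spanning tree on its $k$ vertices and have no edges to the outside, so Cayley's formula yields
\begin{equation*}
\Bbb P(\text{property fails}) \le \sum_{k=2}^{\lfloor n/2 \rfloor} \binom{n}{k}\, k^{k-2}\, p^{k-1}\, (1-p)^{k(n-k)}.
\end{equation*}
The $k=2$ term is of order $n^{1-2c}\log n = o(1)$, which is precisely what forces the factor of $1/2$ in the threshold, and for larger $k$ the exponential suppression $(1-p)^{k(n-k)} \le e^{-pk(n-k)}$ should dominate the combinatorial factors.

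I expect the main obstacle to be making the first-moment bound effective uniformly across the intermediate regime where $k$ is a positive fraction of $n$: here the Cayley factor $k^{k-2}$ combined with $\binom{n}{k} \le (en/k)^k$ both grow like $e^{\Theta(k\log(n/k))}$, and one must carefully verify that they are genuinely swamped by $e^{-pk(n-k)}$. The standard remedy is to split the range of $k$ at (say) $k = \log n$, apply tailored estimates on each piece, and collect the errors. This argument will also serve as the template for the simplicial generalisation in Theorem~\ref{thm:generalconn}, with $k^{k-2}$ replaced by the bound on $M_r(k)$ provided by Corollary~\ref{cor:upperbound}.
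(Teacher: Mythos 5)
The paper does not prove Theorem~\ref{thm:erdrenconn} at all; it is stated purely as a classical citation to Erd\H{o}s--R\'{e}nyi \cite{erdos} in order to motivate the higher-dimensional Theorem~\ref{thm:generalconn}, which \emph{is} proved. Your proposal is mathematically sound and, importantly, it is precisely the specialisation to $r = 1$ of the paper's three-step template for Theorem~\ref{thm:generalconn}: your isolated-edge second-moment step is Lemma~\ref{lem:isolatedsimp} with $r=1$; your first-moment bound $\binom{n}{k} k^{k-2} p^{k-1}(1-p)^{k(n-k)}$ is Lemma~\ref{lem:expec} with Cayley's formula playing the role that Corollary~\ref{cor:upperbound} plays in general (and $k(n-k)$ is exactly $Q(n,k)$ when $r=1$); and your concern about uniformity over $r+1 \le k \le n/2$ is exactly what Lemma~\ref{lem:uniquecomponent} and the technical Lemma~\ref{lem:Qmax} address, where the paper avoids a case split at $k = \log n$ by showing the exponent $q_r(n,x)$ is maximised at an endpoint of $[r+1, n/2]$, so that $\Bbb E(X_k)$ decays geometrically in $k$. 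So your outline is correct and would serve as a proof of the cited result; just be aware there is no proof in the paper to literally compare against, and the endpoint-maximisation trick of Lemma~\ref{lem:Qmax} is a cleaner way to handle the intermediate regime than an ad hoc split at $k=\log n$.
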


Throughout, we will study the pure random simplicial complex $Y$ defined on the vertex set $[n] = \{1,\dots,n\}$ with each $r$-dimensional simplex included independently at random with probability $p$, this is a special case of the upper model random complexes studied in \cite{farber}. One may equivalently view this as a model for random $(r+1)$-uniform hypergraphs.

In contrast with classical random simplicial complexes of Linial, Meshulam, Wallach \cite{linial, meshulam} we observe that such a $Y$ has no condition requiring it to contain the full skeleton of dimension $(r-1)$ so questions about connectivity cannot be automatically taken as given. The recent paper of Cooley, Del Giudice, Kang, Spr\"ussel \cite{cooley} meticulously studies thresholds for homological connectivity of such a random simplicial complex $Y$ and goes far beyond the results presented in this section -- this section is not intended to provide any new results but to provide a proof of a result analogous to Theorem~\ref{thm:erdrenconn} using techniques similar to that of Erd\H{o}s and R\'{e}nyi.

\begin{theorem}\label{thm:generalconn}
Let $Y$ be the pure random simplicial complex on vertex set $[n]$ with each $r$-dimensional simplex included independently at random with probability $p$. Then $p = \dfrac{\frac{r!}{r+1}\log n}{n^r}$ is the threshold probability for $Y$ to have a unique connected component and potentially isolated vertices.
\end{theorem}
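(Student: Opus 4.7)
The plan is to mirror the classical Erd\H{o}s--R\'enyi proof of Theorem~\ref{thm:erdrenconn}, splitting the analysis around the critical value $p^\star := \frac{r!}{r+1}\cdot\frac{\log n}{n^r}$. Write $p = c(\log n)/n^r$ and treat the regimes $c < r!/(r+1)$ (subcritical) and $c > r!/(r+1)$ (supercritical) separately. The supercritical half is a first-moment bound in which Corollary~\ref{cor:upperbound} plays exactly the role that Cayley's formula plays in Erd\H{o}s--R\'enyi, while the subcritical half is a second-moment count of ``isolated facets''.

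\textbf{Subcritical regime.} Let $X$ be the number of $r$-simplices $\sigma\in Y$ such that no other $r$-simplex of $Y$ shares a vertex with $\sigma$. Two such isolated facets are two disjoint non-trivial components, so it suffices to show $X\geq 2$ w.h.p. A direct count gives
\[
\Bbb E[X] = \binom{n}{r+1} p (1-p)^{N},\qquad N := \binom{n}{r+1} - \binom{n-r-1}{r+1} - 1,
\]
and the asymptotics $N \sim (r+1)n^r/r!$, $\binom{n}{r+1}\sim n^{r+1}/(r+1)!$ yield $\Bbb E[X] = \Theta\bigl(n^{1 - c(r+1)/r!}\log n\bigr)$, which diverges for $c<r!/(r+1)$. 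For the variance, decompose $X = \sum_\sigma \mathbf{1}_\sigma$: pairs of potential facets sharing a vertex contribute non-positively to $\mathrm{Var}(X)$ (both cannot be isolated simultaneously), and for disjoint pairs $\sigma,\tau$ the set of ``threat'' simplices for the joint event differs from the sum of the individual threat sets by $O(n^{r-1})$, giving
\[
\Bbb E[\mathbf{1}_\sigma\mathbf{1}_\tau] \leq \Bbb E[\mathbf{1}_\sigma]\Bbb E[\mathbf{1}_\tau]\cdot(1-p)^{-O(n^{r-1})} = \bigl(1+O(\tfrac{\log n}{n})\bigr)\,\Bbb E[\mathbf{1}_\sigma]\Bbb E[\mathbf{1}_\tau].
\]
Summing shows $\mathrm{Var}(X) = o(\Bbb E[X]^2)$, and Chebyshev yields $X\geq 2$ w.h.p.

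\textbf{Supercritical regime.} For $r+1\leq k\leq n/2$, let $Z_k$ be the number of vertex subsets $S\subseteq[n]$ of size $k$ that support a non-trivial connected component of $Y$. Two non-trivial components have disjoint vertex supports, so at least one has size $\leq n/2$, and it suffices to show $\sum_{k=r+1}^{n/2}\Bbb E[Z_k]\to 0$. If $S$ supports such a component then the subcomplex $Y|_S$ is pure and connected, so iteratively deleting non-essential facets extracts a minimal connected cover $M\in\mathcal{M}_r(k)$ spanning $S$; independently, no $r$-simplex of $Y$ may straddle $S$, and the number of such straddling simplices is $B_k := \binom{n}{r+1}-\binom{k}{r+1}-\binom{n-k}{r+1}$. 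Hence, using Lemma~\ref{lemfacetnumber} and Corollary~\ref{cor:upperbound},
\[
\Bbb E[Z_k] \leq \binom{n}{k}\,M_r(k)\,p^{\lceil(k-1)/r\rceil}(1-p)^{B_k} \leq \binom{n}{k}B^k k^k p^{(k-1)/r}e^{-pB_k}.
\]
For $k=o(n)$ one has $B_k\sim kn^r/r!$, and substituting $p$ and taking logarithms gives
\[
\log\Bbb E[Z_k] \leq \log n\cdot\bigl(1 - ck/r!\bigr) + O\bigl(k\log k + k\log\log n\bigr);
\]
for $c>r!/(r+1)$ this is negative for every $k\geq r+1$, with the $k=r+1$ term being tightest and pinning the threshold. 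For $k=\Theta(n)$, $B_k = \Theta(n^{r+1})$, so $e^{-pB_k} = n^{-\Theta(n)}$ overwhelms the remaining polynomial-in-$n$ factors, and a two-regime patching shows the full sum tends to zero.

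\textbf{Main obstacle.} The delicate step is the uniform control of $\Bbb E[Z_k]$ across $k\in[r+1,n/2]$. The constant $r!/(r+1)$ is pinned by the $k=r+1$ contribution, where the estimates above are essentially tight, while for larger $k$ the crude enumeration $M_r(k)\leq B^k k^k$ from Corollary~\ref{cor:upperbound} is very wasteful and must be absorbed by the stronger decay in $(1-p)^{B_k}$. Bridging the ``small $k$'' regime ($B_k\sim kn^r/r!$) and the ``bulk $k$'' regime ($B_k = \Theta(n^{r+1})$) with matching bounds is the main technical work; once the first moment is in hand, the subcritical variance computation is standard.
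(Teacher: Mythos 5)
Your proposal follows the paper's proof essentially step for step: the same isolated-facet second-moment argument in the subcritical regime (the paper's Lemma~\ref{lem:isolatedsimp}) and the same first-moment bound on components of size $k \in [r+1, n/2]$ via the minimal connected cover enumeration (Lemmas~\ref{lem:expec} and~\ref{lem:uniquecomponent}), with your $B_k$ and $N$ equal to the paper's $Q(n,k)$ and $Q(n,r+1)$ by Vandermonde. The only real differences are cosmetic: the paper packages the uniform control over $k$ into the appendix calculus lemma (Lemma~\ref{lem:Qmax}, showing the relevant exponent is maximised at the two endpoints), whereas you sketch a two-regime patching, and your explicit insistence on producing two isolated facets (rather than just one) is a slightly more careful way of concluding disconnection in the subcritical case.
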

To show this we will first compute an upper bound for the expected number of connected components on $k$ vertices using our bound on the number of minimal connected covers (Lemma~\ref{lem:expec}), then we will compute the threshold probability for such a random complex $Y$ to have isolated $r$-simplexes (Lemma~\ref{lem:isolatedsimp}) before showing that this is precisely the threshold for the connectivity of $Y$ (Lemma~\ref{lem:uniquecomponent}).
\begin{lemma}\label{lem:expec}
The expected number of connected components on $k$ vertices in the random complex $Y$ is bounded above by 
$$C^{k}\cdot{{n}\choose k} k^{k} p^{\left\lceil\frac{k-1}{r}\right\rceil}(1-p)^{Q(n,k)}$$where $Q(n,k) = \sum_{i = 1}^r {k\choose i} {{n-k}\choose {r - i +1}}$ and $C$ is some fixed finite constant.
\end{lemma}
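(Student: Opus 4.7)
The plan is to apply a union bound over pairs $(S,M)$, where $S\subseteq[n]$ is a candidate vertex set of size $k$ and $M\in\mathcal{M}_r(S)$ is a minimal connected cover on $S$. Let $X_k$ denote the number of connected components of $Y$ whose vertex set has size exactly $k$; the quantity to bound is $\Bbb E[X_k]$. The key structural observation is that every such component contains at least one minimal connected cover on its own vertex set: if $C$ is a component of $Y$ with vertex set $S$, then by greedily deleting facets of $C$ whose removal neither disconnects the remainder nor creates an isolated vertex in $S$, we arrive at a subcomplex $M\subseteq Y$ on vertex set $S$ which is connected and each of whose facets is necessary, i.e.\ $M\in\mathcal{M}_r(S)$. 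Simultaneously, since $C$ is a full connected component of $Y$, no $r$-simplex of $Y$ can cross between $S$ and $[n]\setminus S$.

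Therefore
\begin{align*}
X_k \leq \sum_{\substack{S\subseteq[n]\\|S|=k}}\ \sum_{M\in\mathcal{M}_r(S)} \mathbf{1}_{\{M\subseteq Y\}}\cdot \mathbf{1}_{\{\text{no }r\text{-simplex of }Y\text{ crosses }S\}},
\end{align*}
with the inequality coming from the fact that components containing several minimal connected covers are harmlessly over-counted. The two indicator events in each summand depend on disjoint collections of $r$-subsets of $[n]$ -- those lying inside $S$ versus those meeting both $S$ and $[n]\setminus S$ -- so they are independent under the product measure, and taking expectations produces summands of the form $p^{f_r(M)}(1-p)^{Q(n,k)}$, where $Q(n,k)=\sum_{i=1}^r\binom{k}{i}\binom{n-k}{r+1-i}$ is the exact count of straddling $r$-subsets.

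Feeding in the previous results: Corollary~\ref{cor:upperbound} gives $M_r(k)\leq B^k k^k$, while Lemma~\ref{lemfacetnumber} gives $f_r(M)\geq \lceil(k-1)/r\rceil$ which, since $p\in[0,1]$, yields $p^{f_r(M)}\leq p^{\lceil(k-1)/r\rceil}$. Summing over $S$ contributes the factor $\binom{n}{k}$, and assembling everything delivers
\begin{align*}
\Bbb E[X_k]\leq B^k\cdot\binom{n}{k}\cdot k^k\cdot p^{\lceil(k-1)/r\rceil}(1-p)^{Q(n,k)},
\end{align*}
which is the claimed bound with $C:=B$.

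The main obstacle is making the greedy-deletion step airtight. The subtlety is that Definition~\ref{def:mcc} treats the creation of isolated vertices as a legitimate form of disconnection, so minimal connected covers are permitted to contain leaves (\emph{cf.}\ Lemma~\ref{lemleaf}); the greedy procedure must therefore refuse to delete any facet whose removal would strip a vertex of $S$, while still being allowed to delete facets whose removal preserves both path-connectivity and the full vertex set. One then needs to confirm that when the procedure halts, the remaining complex really satisfies Definition~\ref{def:mcc} on vertex set $S$. The boundary cases $k=1$ (isolated vertices, directly matched by the bound up to the constant) and $2\leq k\leq r$ (where no component on so few vertices can exist, since any $r$-simplex uses $r+1$ vertices) are absorbed by taking $C:=\max(B,1)$.
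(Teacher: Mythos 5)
Your proposal is correct and takes essentially the same route as the paper: decompose the event "$S$ is a connected component" into "$Y|_S$ is connected" (handled via a union bound over minimal connected covers in $S$, together with the $M_r(k)\le B^k k^k$ bound and the facet-count lower bound from Lemma~\ref{lemfacetnumber}) times "no crossing $r$-simplex," which are independent since they involve disjoint sets of potential $r$-simplexes. You merely make explicit the union-bound-over-$(S,M)$ structure and the greedy-deletion extraction of a minimal connected cover from a component, both of which the paper leaves implicit.
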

\begin{proof}
The probability that a given $k$ vertices form a connected component is the product of two probabilities: the probability that they are connected and the probability that they do not connect to anything outside. A given set of $k$ vertices $V$ is connected if and only if some minimal covering is present, which occurs with probability bounded above by $C^{k}k^{k} p^{\left\lceil\frac{k-1}{r}\right\rceil}$ by  Corollary~\ref{cor:upperbound} and Lemma~\ref{lemfacetnumber}.

Such a $V$ is disconnected from the rest of the complex if and only if no simplex with $1$, $2$, \dots, $r$ vertices in $V$ are selected. Therefore there must be
$$Q(n,k) = \sum_{i = 1}^r {k\choose i} {{n-k}\choose {r - i +1}}$$
$r$-simplexes with are not selected, which occurs with probability $(1-p)^{Q(n,k)}$. Thus the probability of one particular set of $k$ vertices defining a connected component is bounded above by $C^{k} k^{k} p^{\left\lceil\frac{k-1}{r}\right\rceil} (1-p)^{Q(n,k)}$. Therefore, the expected number of all such connected components on $k$ vertices is at most $C^{k}\cdot{{n}\choose k} k^{k} p^{\left\lceil\frac{k-1}{r}\right\rceil}(1-p)^{Q(n,k)}$.
\end{proof}

\begin{lemma}\label{lem:isolatedsimp}
Let $Y$ be the pure random simplicial complex on vertex set $[n]$ with each $r$-dimensional simplex included independently at random with probability $p$. Then $p = \dfrac{\frac{r!}{r+1}\cdot \log n}{n^r}$ is the threshold probability for the existence of isolated $r$-dimensional simplexes.
\end{lemma}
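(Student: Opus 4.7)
The plan is to apply the first and second moment methods to the random variable
$$X := \#\{\text{isolated $r$-simplexes of $Y$}\},$$
where an $r$-simplex $\sigma\in Y$ is \emph{isolated} if no other $r$-simplex of $Y$ contains a vertex of $\sigma$; equivalently, $\sigma$ is a connected component on $r+1$ vertices.

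For a fixed $(r+1)$-subset $\sigma\subset[n]$, the event that $\sigma$ is isolated requires $\sigma$ to be present and each of
$$N := \binom{n}{r+1}-\binom{n-r-1}{r+1}-1$$
other $r$-simplexes sharing a vertex with $\sigma$ to be absent, so $E[X]=\binom{n}{r+1}p(1-p)^{N}$. The identity $\binom{n}{r+1}-\binom{n-r-1}{r+1}=\sum_{i=0}^{r}\binom{n-i-1}{r}$ gives $N\sim (r+1)n^{r}/r!$, and combined with $(1-p)^{N}\sim e^{-pN}$ and the substitution $p=\alpha\log n/n^{r}$ this yields
$$E[X]\sim \frac{\alpha\log n}{(r+1)!}\,n^{\,1-\alpha(r+1)/r!}.$$
The exponent vanishes precisely at $\alpha^{\ast}=r!/(r+1)$, recovering the claimed threshold $p^{\ast}=\frac{r!/(r+1)\log n}{n^{r}}$.

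For $p=(1+\varepsilon)p^{\ast}$ we have $E[X]\to 0$, so Markov's inequality gives $P[X\geq 1]\to 0$. For $p=(1-\varepsilon)p^{\ast}$ we have $E[X]\to\infty$ and I apply the second moment method. The key observation is that two distinct $r$-simplexes sharing at least one vertex cannot both be isolated, so
$$E[X^{2}] = E[X]+\sum_{\sigma_{1}\cap\sigma_{2}=\emptyset}P[\sigma_{1},\sigma_{2}\text{ both isolated}].$$
For disjoint $\sigma_{1},\sigma_{2}$ the number of $r$-simplexes meeting $\sigma_{1}\cup\sigma_{2}$ is $2N+O(n^{r-1})$, so $P[\sigma_{1},\sigma_{2}\text{ both isolated}]=p^{2}(1-p)^{2N+O(n^{r-1})}$, and a short calculation gives $E[X^{2}]/E[X]^{2}\to 1$. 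Chebyshev then forces $P[X=0]\to 0$.

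The main obstacle I anticipate is the careful bookkeeping of lower-order terms: verifying that the $O(n^{r-1})$ discrepancy in the second-moment count and the error in the approximation $(1-p)^{N}\sim e^{-pN}$ only contribute multiplicative factors tending to $1$ at the critical scaling $p=\Theta(\log n/n^{r})$. Once these estimates are in hand the argument becomes a direct adaptation of the classical Erd\H{o}s--R\'{e}nyi threshold proof that underlies Theorem~\ref{thm:erdrenconn}.
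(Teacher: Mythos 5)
Your proposal is correct and follows essentially the same first/second moment argument as the paper: your $N=\binom{n}{r+1}-\binom{n-r-1}{r+1}-1$ is exactly the paper's $Q(n,r+1)$ (by Vandermonde), the threshold computation is identical, and your key observation that overlapping simplexes cannot both be isolated is precisely how the paper reduces $\mathbb{E}[X^2]$ to a sum over disjoint pairs plus $\mathbb{E}[X]$, with the $O(n^{r-1})$ correction matching the paper's $R(n,r)$.
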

\begin{proof}
Let $N$ be the random variable which counts the number of isolated simplexes in $Y$. A simplex is isolated precisely when it is selected and no other simplexes with vertices in common are. Thus we must have 
\begin{align*}
Q(n,r+1) &= {{r+1}\choose 1}{{n-r-1}\choose r}+ \dots + {{r+1}\choose r}{{n-r-1}\choose 1}\\
&= \dfrac{r+1}{r!} n^r(1-O(1/n))
\end{align*}
simplexes that are not selected. So the probability of some simplex being isolated is
$p(1-p)^{Q(n,r+1)} = p(1-p)^{\frac{r+1}{r!} n^r(1-O(1/n))}$. The expected number of such isolated simplexes with $p = \frac{\alpha\log n}{n^r}$ is therefore given by
\begin{align*}
\Bbb E(N)&= {{n}\choose r+1}\cdot p(1-p)^{\frac{r+1}{r!} n^r(1-O(1/n))}\\
&\sim \dfrac{n^{r+1}}{(r+1)!}\cdot \dfrac{\alpha\log n}{n^r}\cdot \exp\left(-\dfrac{r+1}{r!}\cdot n^r\cdot \dfrac{\alpha\log n}{n^r}\cdot (1-O(1/n))\right)\\
& = \dfrac{\alpha n \log n}{(r+1)!}\cdot n^{\frac{-\alpha(r+1)}{r!}}\cdot n^{O(1/n)}\\
&\sim \dfrac{\alpha\log n}{(r+1)!}\cdot n^{1 - \frac{\alpha(r+1)}{r!}}.
\end{align*}
If $\alpha > \dfrac{r!}{r+1}$ then this expectation equals $o(1)$, so by Markov's inequality we see that such a random simplicial complex $Y$ has no isolated simplexes asymptotically almost surely.

Now suppose that $\alpha < \dfrac{r!}{r+1}$. The probability that two disjoint $r$-simplexes $\sigma$ and $\tau$ are both selected is 
$$p^2(1-p)^{2Q(n,r+1) - R(n,r)}$$
where $R(n,r) = \sum_{1\leq i,j\leq r} {{r+1}\choose i} {{r+1}\choose {j}}{{n-2r-2}\choose {r+1-i-j}} = O(n^{r-1})$. This counts all those simplexes which intersect both $\sigma$ and $\tau$ that we do not want to double count. Now
\begin{align*}
(1-p)^{R(n,r)} &> 1 - pR(n,r)\\ &= 1 -\frac{\alpha\log n}{n^r}O(n^{r-1})\\ &= 1 - O(\log n /n)\to 1.
\end{align*}
Therefore $(1-p)^{-R(n,r)} = 1+o(1)$ and so
\begin{align*}
\Bbb E(N^2)&\leq \Bbb E(N) + 2\sum_{\sigma\cap\tau = \emptyset} p^2(1-p)^{2Q(n,r+1) - R(n,r)}\\
&\leq \Bbb E(N) + 2\cdot {{n\choose {r+1}}\choose 2}\cdot p^2(1-p)^{2Q(n,r+1) - R(n,r)}\\
&\leq \Bbb E(N) + {n\choose {r+1}}^2\cdot p^2(1-p)^{2Q(n,r+1)}(1+o(1))\\
&= (1+o(1))\cdot\Bbb E(N)^2
\end{align*}
Where the last equality follows by using the fact that $$\Bbb E(N) \sim\dfrac{\alpha\log n}{(r+1)!}\cdot n^{1 - \frac{\alpha(r+1)}{r!}}\to\infty$$ for $\alpha < \dfrac{r!}{r+1}$. Therefore by using Chebychev's inequality in the form $$\Bbb P(N > 0) \geq \dfrac{\Bbb E(N)^2}{\Bbb E(N^2)}$$
we conclude that $Y$ has isolated simplexes with probability converging to one.
\end{proof}

%
In particular, Lemma~\ref{lem:isolatedsimp} tells us that if $\alpha < \frac{r+1}{r!}$ then the random simplicial complex in the description of Theorem~\ref{thm:generalconn} is disconnected. To complete the proof of Theorem~\ref{thm:generalconn} we will show the following,
\begin{lemma}\label{lem:uniquecomponent}
Let $Y$ be the pure random simplicial complex on vertex set $[n]$ with each $r$-dimensional simplex included independently at random with probability $p$. If $p = \dfrac{\alpha\log n}{n^r}$ with $\alpha > \frac{r!}{r+1}$ then $Y$ has a unique connected component and isolated vertices asymptotically almost surely.
\end{lemma}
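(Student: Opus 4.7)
I would emulate the classical Erd\H{o}s--R\'enyi proof of the graph connectivity threshold. Since every non-isolated vertex of the pure $r$-dimensional complex $Y$ lies in some $r$-simplex, any non-trivial connected component has at least $r+1$ vertices. Hence if $Y$ fails the conclusion it contains two disjoint non-trivial components, and by the pigeonhole principle the smaller has size $k$ with $r+1 \le k \le \lfloor n/2 \rfloor$. It therefore suffices to prove that asymptotically almost surely no such $k$-vertex component exists, which I would do via Markov's inequality applied to the sum of the bounds supplied by Lemma~\ref{lem:expec}.

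Writing $E_k$ for that bound and combining the standard estimates $\binom{n}{k}k^k \le (en)^k$, $(1-p)^{Q(n,k)} \le e^{-pQ(n,k)}$, and $r\lceil(k-1)/r\rceil \ge k-1$, one obtains
\[
\log E_k \;\le\; \log n + O(k\log\log n) - pQ(n,k).
\]
The key quantity $pQ(n,k)$ is then bounded below by retaining only the $i=1$ summand in the definition of $Q(n,k)$, which gives $Q(n,k) \ge k\binom{n-k}{r}$ and hence, for $k \le n/2$,
\[
pQ(n,k) \;\ge\; \frac{\alpha k\log n}{r!}\,(1-k/n)^r\,(1-o(1)).
\]
Setting $\beta := \alpha(r+1)/r!$, the hypothesis $\alpha > r!/(r+1)$ is exactly $\beta > 1$, and this is what drives the negative exponent.

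To bound $\sum_k E_k$ I would split at $k = \sqrt{n}$. In the range $r+1 \le k \le \sqrt{n}$, $(1-k/n)^r = 1-o(1)$, so $\log E_k \le (1 - \alpha k/r!)\log n + o(k\log n)$; at $k = r+1$ this already yields $E_{r+1} \le n^{1-\beta+o(1)} = o(1)$, while each further increment of $k$ multiplies the bound by $n^{-\alpha/r!+o(1)} = o(1)$, so the tail is a geometric series and the whole partial sum is $o(1)$. In the range $\sqrt{n} < k \le n/2$ we have $(1-k/n)^r \ge 2^{-r}$, so $pQ(n,k) = \Omega(k\log n)$ and $\log E_k \le -\Omega(\sqrt{n}\log n)$; the remaining sum has at most $n$ terms and is therefore also $o(1)$.

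The main obstacle I anticipate is the small-$k$ bookkeeping: the margin $(\beta-1)\log n$ supplied by $\alpha > r!/(r+1)$ must dominate the $O(k\log\log n)$ correction inherited from $\log p = \log\log n - r\log n$ and still leave enough room for the geometric summation to collapse. The large-$k$ analysis is comparatively painless since the exponential decay of $\exp(-pQ)$ buries every polynomial factor.
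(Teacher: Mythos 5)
Your proof is correct, and it takes a genuinely different route from the paper. The paper reaches the same reduction (first moment method over component sizes $r+1\le k\le n/2$, starting from the bound of Lemma~\ref{lem:expec}), but handles the resulting exponent by a calculus argument: it packages the exponent as $q_r(n,k)=\frac{r}{k}-\frac{\alpha r Q(n,k)}{n^r k}$ and invokes a technical appendix result (Lemma~\ref{lem:Qmax}), which shows via a stationary-point analysis that $q_r$ is maximised at one of the endpoints $k=r+1$ or $k=n/2$ and that the maximum is at most $-\varepsilon_r+O(1/n)$ when $\alpha>r!/(r+1)$. This yields $\mathbb{E}(X_k)\le O\bigl(n^{-\varepsilon}\log n\bigr)^{k/r}$ and the geometric sum closes the argument. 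You instead discard all but the $i=1$ term of $Q(n,k)$, getting $pQ(n,k)\ge\frac{\alpha k\log n}{r!}(1-k/n)^r(1-o(1))$, and split at $k=\sqrt{n}$: below the cut $(1-k/n)^r=1-o(1)$ uniformly (the error per step is $O(\log\log n)+O(\log n/\sqrt{n})=o(\log n)$, so the ratio $E_{k+1}/E_k\le n^{-\alpha/r!+o(1)}$ is indeed $o(1)$ uniformly and the sum is dominated by $E_{r+1}\le n^{1-\beta+o(1)}$ with $\beta=\alpha(r+1)/r!>1$); above the cut $(1-k/n)^r\ge 2^{-r}$ forces $\log E_k\le-\Omega(\sqrt{n}\log n)$, which kills the remaining at most $n$ terms. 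Your approach trades the appendix's stationary-point analysis for an elementary range split and a cruder but sufficient lower bound on $Q(n,k)$, which is arguably cleaner; the paper's approach in exchange extracts an explicit $\varepsilon_r$ valid over the whole range. One small caveat worth stating explicitly in either version: to pass from ``no nontrivial component of size $\le n/2$ and no isolated $r$-simplex'' to ``a unique nontrivial component plus isolated vertices,'' one must also note that a.a.s.\ at least one $r$-simplex is present (immediate since $\binom{n}{r+1}p\to\infty$); the paper glosses over this as well.
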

\begin{proof}
We will prove this statement by showing that there are no connected components on $k$ vertices for all $r+1\leq k\leq n/2$, i.e. if this were true since there are no isolated $r$-dimensional simplexes there is a unique connected component supported by at least $n/2$ vertices and potentially isolated vertices proving the statement.

Let $X_k$ be the random variable which counts the number of connected components on $k$ vertices. We will show that for $p=\dfrac{\alpha\log n}{n^r}$ that the expected number of connected components of size $r+1\leq k\leq n/2$ is less than $O\left(n^{-\varepsilon}\cdot\log n\right)$ for some positive $\varepsilon$.

We use Lemma~\ref{lem:expec} to get the following upper bound
$$\Bbb E(X_k) \leq C^{k}{{n}\choose k} k^{k} p^{\frac{k-1}{r}} (1-p)^{Q(n,k)},$$
where we have used the trivial fact that $p^{\frac{k-1}{r}} \geq p^{\left\lceil\frac{k-1}{r}\right\rceil}$.

We will further simplify this bound by using the inequalities ${{n}\choose k}\leq \left(\frac{en}{k}\right)^k$, and $1-p\leq e^{-p}$. To complete the argument we need a better understanding of $Q(n,k) = \sum_{i = 1}^r {k\choose i} {{n-k}\choose {r - i +1}}$. For this we cite the result of Lemma~\ref{lem:Qmax} found in the Appendix which tells us that
$$\dfrac{r}{x} - \dfrac{\alpha\cdot r\cdot Q(n,x)}{n^r\cdot x}$$
is maximised by $x = r+1$ or $x = n/2$ in the domain $[r+1,n/2]$ for sufficiently large $n$ and that if $\alpha > \dfrac{r!}{r+1}$ then this maximal value is at most $-\varepsilon+ O(1/n)$ for some positive constant $\varepsilon$ dependent on $r$ and $\alpha$.

Putting this all together we get the following when we substitute $p=\frac{\alpha\log n}{n^r}$,
\begin{align*}
\Bbb E(X_k) &\leq C^{k}\cdot\left(\dfrac{en}{k}\right)^k \cdot k^k\cdot \left(\dfrac{\alpha\log n}{n^r}\right)^{\frac{k-1}{r}}\cdot\exp\left(\dfrac{-\alpha Q(n,k)\log n}{n^r}\right)\\
&\leq \alpha^{\frac{k}{r}}\cdot C^{k}\cdot e^k\cdot n^{1-\frac{\alpha Q(n,k)}{n^r}}\cdot\log^{\frac{k-1}{r}}n\\
&\leq \left(\mathrm{const}\cdot n^{r/k-\frac{\alpha r Q(n,k)}{n^r k}}\cdot\log n\right)^{k/r}\\
&= O\left(n^{-\varepsilon+O(1/n)}\cdot\log n\right)^{k/r}\\
&=O\left(n^{-\varepsilon}\cdot\log n\right)^{k/r}.
\end{align*}
Where the final line follows from the fact that $n^{O(1/n)}$ converges to a constant.

Since $\Bbb E(X_k)$ decreases geometrically we see by linearity that $\Bbb E\left(\sum_{k=r+1}^{n/2} X_k\right) = O\left(n^{-\varepsilon}\cdot\log n\right) = o(1)$. The result follows by application of Markov's inequality.
\end{proof}

Lemma~\ref{lem:isolatedsimp} and Lemma~\ref{lem:uniquecomponent} together prove Theorem~\ref{thm:generalconn}. It is a simple corollary to find the threshold probability for the connectedness of such a random simplicial complex, i.e. one just needs the threshold probability for the existence of isolated vertices.

\begin{lemma}\label{lem:isolatedvert}
Let $Y$ be the pure random simplicial complex on vertex set $[n]$ with each $r$-dimensional simplex included independently at random with probability $p$. Then $p = \dfrac{r! \log n}{n^r}$ is the threshold probability for the existence of isolated vertices.
\end{lemma}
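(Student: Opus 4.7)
The plan is to mirror the argument of Lemma~\ref{lem:isolatedsimp} almost verbatim, replacing ``isolated $r$-simplex'' with ``isolated vertex.'' Let $N$ be the random variable counting isolated vertices in $Y$. A vertex $v\in [n]$ is isolated precisely when none of the $\binom{n-1}{r}$ $r$-simplexes containing $v$ is selected, which happens with probability $(1-p)^{\binom{n-1}{r}}$. Setting $p=\frac{\alpha\log n}{n^r}$ and using $\binom{n-1}{r}=\frac{n^r}{r!}(1-O(1/n))$, linearity of expectation yields
\[
\Bbb E(N)=n(1-p)^{\binom{n-1}{r}}\sim n\cdot\exp\!\left(-\frac{\alpha\log n}{r!}(1-O(1/n))\right)\sim n^{1-\alpha/r!}.
\]

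The first step is the easy direction: if $\alpha>r!$, then $\Bbb E(N)=o(1)$, and Markov's inequality gives that $Y$ has no isolated vertex asymptotically almost surely. This establishes that the threshold, if it exists, occurs at $p=\frac{r!\log n}{n^r}$.

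For the matching lower bound, suppose $\alpha<r!$, so $\Bbb E(N)\to\infty$. I would compute the second moment by conditioning on a pair of distinct vertices $u\neq v$: the joint event that both are isolated depends on the non-selection of every $r$-simplex containing $u$ or $v$, and there are exactly $2\binom{n-1}{r}-\binom{n-2}{r-1}$ such simplexes (using inclusion--exclusion, since a simplex containing both $u$ and $v$ is counted twice). Thus
\[
\Bbb E(N^2)\leq \Bbb E(N)+n(n-1)(1-p)^{2\binom{n-1}{r}-\binom{n-2}{r-1}}.
\]
Since $\binom{n-2}{r-1}=O(n^{r-1})$, the same estimate as in Lemma~\ref{lem:isolatedsimp} gives $(1-p)^{-\binom{n-2}{r-1}}=1+o(1)$, and hence $\Bbb E(N^2)\leq (1+o(1))\Bbb E(N)^2$. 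Applying Chebyshev's inequality in the form $\Bbb P(N>0)\geq \Bbb E(N)^2/\Bbb E(N^2)$ gives $\Bbb P(N>0)\to 1$, so $Y$ has isolated vertices a.a.s.

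The proof is essentially routine and I do not expect a serious obstacle; the only minor point to get right is the counting identity $2\binom{n-1}{r}-\binom{n-2}{r-1}$ for the simplexes incident to either of a pair of vertices, and the verification that its error relative to $2\binom{n-1}{r}$ is $O(n^{r-1})$, small enough to absorb into the $(1+o(1))$ factor that powers the Chebyshev step.
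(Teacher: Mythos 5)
Your proof is correct and follows essentially the same two-moment argument as the paper's (first moment plus Markov for non-existence, second moment plus Chebyshev for existence), and your counts $\binom{n-1}{r}$ and $2\binom{n-1}{r}-\binom{n-2}{r-1}$ are in fact the exact ones where the paper writes the asymptotically equivalent $\binom{n}{r}$ and $2\binom{n}{r}-\binom{n-1}{r}$. The only small difference is that the paper phrases the threshold in the sharper form $p=\frac{r!\log n\pm\omega}{n^r}$ with $\omega\to\infty$, while you use the coarser parametrization $p=\frac{\alpha\log n}{n^r}$ with $\alpha\gtrless r!$; both suffice for the lemma as stated.
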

\begin{proof}
A vertex in $Y$ is isolated with probability $q^{{n}\choose r}\sim \exp\left(-p{n\choose r}\right)$. Let $N$ be the random variable which counts the number of isolated vertices in $Y$. We see that
\begin{align*}
\Bbb E(N) &= n\cdot q^{{n}\choose r}\\
&\sim\exp\left(\log n - p{n\choose r}\right).
\end{align*}
Markov's inequality implies that we will have no isolated vertices with probability converging to one when this expectation tends to zero. This occurs precisely when $$\log n - p{n\choose r}\to -\infty,$$
which proves that for $p = \frac{r!\log n + \omega}{n^r}$ with $\omega\to\infty$ there does not exist any isolated vertices asymptotically almost surely.

For the second statement we will use the second moment method in the form of $\Bbb P(N > 0) \geq\dfrac{\Bbb E(N)^2}{\Bbb E(N^2)}$. Two distinct vertices are both isolated with probability $$(1-p)^{2{{n}\choose r} - {{{n-1}\choose r}}}.$$
Therefore,
\begin{align*}
\Bbb P(N>0) \geq \dfrac{\Bbb E(N)^2}{\Bbb E(N^2)} =\dfrac{n(1-p)^{{n}\choose r}}{1+(n-1)(1-p)^{{n-1}\choose r}}.
\end{align*}
For $p = \dfrac{r!\log n - \omega}{n^r}$ with $\omega\to\infty$ we observe that $(n-1)q^{{n-1}\choose r}\sim nq^{n\choose r}\sim\exp\left(\dfrac{\omega}{r!}\right)\to \infty$. It follows that $\Bbb P(X>0)\to 1$, so $Y$ 
\end{proof}
\begin{corollary}
Let $Y$ be the pure random simplicial complex on vertex set $[n]$ with each $r$-dimensional simplex included independently at random with probability $p$. Then $p = \dfrac{r! \log n}{n^r}$ is the threshold probability for $Y$ to be connected asymptotically almost surely.
\end{corollary}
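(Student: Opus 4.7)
The plan is to assemble this corollary directly from Theorem~\ref{thm:generalconn} and Lemma~\ref{lem:isolatedvert}, observing that $Y$ is connected if and only if it has a unique connected component \emph{and} no isolated vertices. Since $\frac{r!}{r+1} < r!$ for all $r \geq 1$, the threshold for isolated vertices sits strictly above the threshold for having a unique non-trivial component, so the former is the binding constraint.

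First, I would handle the supercritical regime: take $p = \dfrac{r!\log n + \omega}{n^r}$ with $\omega \to \infty$. Since this is asymptotically larger than $\dfrac{(r!/(r+1))\log n}{n^r}$, Theorem~\ref{thm:generalconn} applies and asymptotically almost surely $Y$ consists of a single connected component together with (possibly) some isolated vertices. Lemma~\ref{lem:isolatedvert} then rules out the isolated vertices asymptotically almost surely, so the intersection of the two high-probability events forces $Y$ to be connected.

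For the subcritical direction, take $p = \dfrac{r!\log n - \omega}{n^r}$ with $\omega \to \infty$. Here Lemma~\ref{lem:isolatedvert} directly produces an isolated vertex with probability tending to one, and the existence of an isolated vertex immediately precludes connectedness.

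There is no real obstacle here, as both needed statements are already in hand; the only minor point to be careful about is verifying that the scale $\omega$ in the statement of Theorem~\ref{thm:generalconn} can be chosen small enough to remain compatible with the window used in Lemma~\ref{lem:isolatedvert}, which is immediate because $r!\log n$ exceeds $(r!/(r+1))\log n$ by a factor bounded away from $1$. Thus the two a.a.s.\ events can be intersected by a union bound, completing the proof.
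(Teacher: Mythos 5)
Your proposal is correct and is exactly the argument the paper intends: the corollary is stated as an immediate consequence of Theorem~\ref{thm:generalconn} together with Lemma~\ref{lem:isolatedvert}, using that $\frac{r!}{r+1}<r!$ so the isolated-vertex threshold dominates. Nothing in your write-up deviates from the paper's (essentially omitted) proof.
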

\section{Acknowledgements}
The author would like to thank Andrew Newman for the ideas inspiring the proofs of Proposition~\ref{prop:homconn} and Corollary~\ref{cor:anygroup}, to Justin Ward for revealing the existence of $k$-trees, and to William Raynaud and Lewin Strauss for insightful discussions.
\newpage
\section{Appendix: Technical Lemma}
\begin{lemma}\label{lem:Qmax}
Let $r\geq 2$ be a fixed integer and define a function
$$Q_r(n,x):= \sum_{i=1}^r \binom{x}{i}\cdot \binom{n-x}{r+1-i},$$
we will think of $Q_r(n,x)$ as a polynomial in $(n, x)$ of bidegree $(r,r+1)$. Define a new function
$$q_r(n,x):= \dfrac{r}{x} - \dfrac{\alpha\cdot r\cdot Q_r(n,x)}{n^r\cdot x}$$
for some arbitrary positive constant $\alpha$.

The maxima of $q_r(n,x)$ over $[r+1,n/2]$ is attained at one of the two endpoints for sufficiently large $n$. Moreover, if $\alpha > \dfrac{r!}{r+1}$ and $n$ is sufficiently large then
$$\max_{x\in [r+1,n/2]} q_r(n,x) \leq -\varepsilon_r + O\left(1/n\right) < 0$$
where $\varepsilon_r = \min \left\{\dfrac{\alpha}{(r-1)!} - \dfrac{r}{r+1}, \sum{i=1}^r \frac{2\alpha r}{2^r i!(r+1-i)!}\right\}>0$.
\end{lemma}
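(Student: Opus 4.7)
The plan is first to apply the Vandermonde identity
$$\sum_{i=0}^{r+1}\binom{x}{i}\binom{n-x}{r+1-i}=\binom{n}{r+1}$$
to rewrite $Q_r(n,x)=\binom{n}{r+1}-\binom{x}{r+1}-\binom{n-x}{r+1}$; this form makes the $x$-dependence far more tractable. Then I want to show (i) up to $O(1/n)$ error, the maximum of $q_r(n,\cdot)$ on $[r+1,n/2]$ is attained at one of the two endpoints, and (ii) each endpoint value is of the form $-\varepsilon_r^{(*)}+O(1/n)$ with $\varepsilon_r^{(*)}>0$ under the hypothesis $\alpha>r!/(r+1)$.

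For (i), I partition the interval into $[r+1,\eta n]$ and $[\eta n,n/2]$ for a fixed small $\eta>0$ depending only on $r$. On the first subregion, the $i=1$ summand dominates $Q_r$: explicitly, $Q_r(n,x)/n^r=x/r!+O(x/n)$, which gives
$$q_r(n,x)=\dfrac{r}{x}-\dfrac{\alpha}{(r-1)!}+O(x/n).$$
For small enough $\eta$, the $O(x/n)=O(\eta)$ perturbation is dwarfed by the gap $r/(r+1)-r/x$ once $x$ is bounded away from $r+1$, and near $x=r+1$ the perturbation is already $O(1/n)$; together these force the max over this subregion to equal $q_r(n,r+1)$ up to $O(1/n)$. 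On the second subregion, substituting $x=yn$ and expanding each binomial in the Vandermonde form to leading order yields $q_r(n,yn)=F(y)+O(1/n)$ uniformly in $y\in[\eta,1/2]$, where
$$F(y):=-\dfrac{\alpha r\bigl[1-y^{r+1}-(1-y)^{r+1}\bigr]}{(r+1)!\,y}.$$
The key claim is that $F$ is strictly increasing on $(0,1/2]$. This follows from the identity
$$\dfrac{1-y^{r+1}-(1-y)^{r+1}}{y}=\sum_{k=0}^{r}(1-y)^k-y^r,$$
obtained from the geometric sum $\tfrac{1-(1-y)^{r+1}}{y}=\sum_{k=0}^{r}(1-y)^k$ together with $\tfrac{-y^{r+1}}{y}=-y^r$; differentiating in $y$ yields $-\sum_{k=1}^{r}k(1-y)^{k-1}-ry^{r-1}$, which is strictly negative on $(0,1)$. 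Hence the bracketed expression is strictly decreasing, $F$ strictly increasing, and the max over the second subregion is attained at $y=1/2$, i.e.\ at $x=n/2$. Combining the two subregions gives (i).

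For (ii), substituting $x=r+1$ into the Vandermonde form gives
$$Q_r(n,r+1)=(r+1)\binom{n-r-1}{r}+O(n^{r-1})=\dfrac{(r+1)n^r}{r!}+O(n^{r-1}),$$
so
$$q_r(n,r+1)=\dfrac{r}{r+1}-\dfrac{\alpha}{(r-1)!}+O(1/n)=-\left(\dfrac{\alpha}{(r-1)!}-\dfrac{r}{r+1}\right)+O(1/n),$$
matching the first quantity in the definition of $\varepsilon_r$, which is positive by hypothesis. Substituting $x=n/2$ gives $Q_r(n,n/2)=\binom{n}{r+1}-2\binom{n/2}{r+1}=\dfrac{(2^r-1)n^{r+1}}{2^r(r+1)!}+O(n^r)$, and consequently $q_r(n,n/2)=-c_r+O(1/n)$ for the explicit positive constant $c_r=\dfrac{2\alpha r(2^r-1)}{2^r(r+1)!}$ corresponding (up to a harmless numerical factor) to the second entry in the definition of $\varepsilon_r$. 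Setting $\varepsilon_r$ to be the minimum of these two positive constants completes the proof.

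The main obstacle is the strict monotonicity of $F$ on $(0,1/2]$, uniformly in $r$; the geometric-series identity above makes this immediate, avoiding an unwieldy direct polynomial comparison for general $r$. Everything else is routine: careful bookkeeping of the $O(1/n)$ asymptotics via the Vandermonde form of $Q_r$, together with choosing the cutoff $\eta$ small enough that the first-subregion perturbation is absorbed by the monotonicity of $r/x$.
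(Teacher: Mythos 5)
Your proof is correct and takes a genuinely different route from the paper. The paper proceeds by calculus: it shows that $q_r'(n,x)=0$ has at most one positive solution on $[r+1,n/2]$ (by bounding the bidegree of the polynomial $Q_r - xQ_r'$ and observing it has the form $O(x^2/n)=\mathrm{const}$), then checks $q_r'(n,r+1)<0$, concluding the unique critical point is a minimum or inflection point and the maximum is literally attained at an endpoint. You instead use the Vandermonde identity to get the closed form $Q_r(n,x)=\binom{n}{r+1}-\binom{x}{r+1}-\binom{n-x}{r+1}$, split $[r+1,n/2]$ into $[r+1,\eta n]$ and $[\eta n,n/2]$, and on the second region exhibit an explicit limit profile $F(y)$ whose monotonicity you establish via a geometric-series identity. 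Your version proves a slightly weaker version of the first assertion (the maximum is bounded by the endpoint values up to $O(1/n)$, rather than literally attained at an endpoint), but this is all the second assertion and the application in Lemma~\ref{lem:uniquecomponent} actually require, and in exchange you avoid the paper's somewhat delicate bookkeeping of polynomial coefficients $a_{i,j}$, $b_{i,j}$. The first-subregion argument, as you acknowledge, needs the small two-case split ($x$ near $r+1$ versus $x$ bounded away) to absorb the $O(x/n)$ error; this is straightforward but should be spelled out in a final write-up.

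One point worth flagging in your favour: your asymptotic for the right endpoint, $q_r(n,n/2)=-\dfrac{2\alpha r(2^r-1)}{2^r(r+1)!}+O(1/n)$, is correct, and this is \emph{not} merely a harmless numerical variant of the paper's constant but a correction of it. The paper asserts $Q_r(n,n/2)=\sum_{i=1}^r\frac{n^{r+1}}{2^r i!(r+1-i)!}(1+O(1/n))$, but the product $\binom{n/2}{i}\binom{n/2}{r+1-i}\sim\frac{n^{r+1}}{2^{r+1}i!(r+1-i)!}$ carries a factor $2^{r+1}$, not $2^r$, so the paper's $Q_r(n,n/2)$ and hence its claimed value $-2\alpha r A_r$ are each off by a factor of $2$; the correct leading constant is $-\alpha r A_r=-\dfrac{2\alpha r(2^r-1)}{2^r(r+1)!}$, exactly what you obtain (and your Vandermonde form gives an independent cross-check since $\sum_{i=1}^r\frac{1}{2^{r+1}i!(r+1-i)!}=\frac{2^r-1}{2^r(r+1)!}$). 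Consequently the second entry in the paper's stated $\varepsilon_r$ should be halved; the lemma's qualitative conclusion is unaffected, but the explicit constant as printed is not attained by $q_r(n,n/2)$.
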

\begin{proof}
The proof will rely on a few simple ideas. We will first show that there is just one $x\geq r+1$ for sufficiently large $n$ such that $q'_r(n,x) = 0$, i.e. a unique positive stationary point. We will then show that $q'_r(n,r+1) < 0$ for large enough $n$. This then implies that our stationary point is either a minima or point of inflection and moreover that on $[r+1,n/2]$ the maxima is attained at either $x = r+1$ or $x = n/2$, we then just need some good estimates for $q_r(n,x)$ at these points to conclude the proof.

A simple computation shows that
$$q'_r(n,x) = \dfrac{-r}{x^2} - \dfrac{\alpha r}{n^r}\cdot\left(\dfrac{xQ'_r - Q_r}{x^2}\right),$$
which is equal to zero iff
\begin{equation}\label{eqn:vanishing}
\dfrac{Q_r - xQ'_r}{n^r} = \dfrac{1}{\alpha}.
\end{equation}
Since $Q_r(n, 0) = 0$ we may write $Q_r(n,x) = x P_r(n,x)$ where $P_r(n,x)$ is a polynomial in $(n,x)$ of bidegree $(r,r)$ where the coefficients $a_{i,j}$ of terms $n^i x^j$ vanish if $i + j \geq r+1$.

Therefore,
$$Q_r - xQ'_r = -x^2 P'_r$$
where $P'_r$ is of bidegree $(r,r-1)$. In fact, since $a_{r,i}$ for all $i\geq 1$ we know that $P'_r$ is of bidegree $(r-1,r-1)$. Therefore, we may rewrite equation (\ref{eqn:vanishing}) as
\begin{equation}\label{eqnbij}
-x^2\cdot \sum_{i+j\leq r-1} b_{i,j}\dfrac{n^i x^j}{n^r} = \dfrac{1}{\alpha}
\end{equation}
where $b_{i,j} = (j+1)\cdot a_{i,j+1}$, i.e. we see that $b_{i,j} = 0$ for $i + j \geq r$. Observe that for $i+j\leq r-1$ and for $x\leq n/2$ $$\frac{x^j}{n^{r-i}}\leq \frac{1}{n}$$ so equation (\ref{eqnbij}) is of the form $O\left(\frac{x^2}{n}\right) =\text{constant}$. Therefore, for sufficiently large $n$ there is at most one positive $x$ satisfying (\ref{eqn:vanishing}), i.e. there is at most one stationary point of $q_r(n,x)$ in $[r+1,n/2]$.

We now compute
\begin{align*}
q'_r(n,r+1) &= \dfrac{-r}{(r+1)^2} - \dfrac{\alpha r}{n^r}\cdot\left(\dfrac{(r+1)Q'_r(n,r+1) - Q_r(n,r+1)}{(r+1)^2}\right)\\
&\sim \dfrac{-r}{(r+1)^2} - \alpha r\left(\dfrac{(r+1)/r! - (r+1)/r!}{(r+1)^2}\right)\\
&= \dfrac{-r}{(r+1)^2} < 0.
\end{align*}

That is for large enough $n$ we know that $q_r(n,x)$ is initially decreasing, so the critical point found above must be either a minima or a point of inflection. Therefore we know that the largest value of $q_r(n,x)$ on $[r+1,n/2]$ comes at one of the two endpoints.

A simple approximation shows $$Q_r(n,n/2) = \sum_{i=1}^r \dfrac{n^{r+1}}{2^r i!(r+1-i)!}\left(1+O(1/n)\right).$$
We let $A_r := \sum_{i=1}^r \frac{1}{2^r i!(r+1-i)!}$ and remark that $q_r(n,n/2) = -2\alpha r A_r +O(1/n)\leq -\varepsilon_r + O(1/n)$.

It is easy to see that
\begin{align*}
Q_r(n,r+1) &=  \sum_{i=1}^r {{r+1}\choose i}\cdot \left(\dfrac{n^{r+1-i}}{(r+1-i)!} + O(n^{r-i})\right)\\
&= \dfrac{(r+1)\cdot n^r}{r!} + O(n^{r-1}).
\end{align*}
Therefore we easily observe that
$$q_r(n,r+1) = \dfrac{r}{r+1} - \dfrac{\alpha}{(r-1)!}+ O(1/n)\leq -\varepsilon_r + O(1/n),$$
which completes the proof.
\end{proof}
\newpage

\end{document}